\def\titlerunning#1{\gdef\titrun{#1}}
\def\author#1{\gdef\autrun{\def\and{\unskip, }#1}\gdef\@author{#1}}
\def\address#1{{\def\and{\\\hspace*{18pt}}\renewcommand{\thefootnote}{}%
\footnote {#1}}%
\markboth{\autrun}{\titrun}}
\def\email#1{e-mail: #1}
\def\subjclass#1{{\renewcommand{\thefootnote}{}%
\footnote{\emph{Mathematics Subject Classification (2020):} #1}}}
\def\keywords#1{\par\medskip
\noindent\textbf{Keywords.} #1}
\newtheorem{theorem}{Theorem}[section]
\newtheorem{corollary}[theorem]{Corollary}
\newtheorem{lemma}[theorem]{Lemma}
\newtheorem{proposition}[theorem]{Proposition}
\theoremstyle{definition}
\newtheorem{definition}[theorem]{Definition}
\newtheorem{remark}[theorem]{Remark}
\numberwithin{equation}{section}
\def \R {\mathbb{R}}
\def \R {\mathbb{R}}
\begin{document}
\baselineskip=17pt

\titlerunning{The Dirichlet eigenvalue problems for some concave elliptic Hessian operators}
\title{The Dirichlet eigenvalue problems for some concave elliptic Hessian operators}

\author{Jiaogen Zhang}

\date{}

\maketitle
 
\address{J. Zhang: School of Mathematics, Hefei University of Technology,
Hefei, Anhui, 230009, P. R. China;
\email{zjgmath@hfut.edu.cn}}

%\author[T. Zheng]{Tao Zheng}
%\address{Tao Zheng; School of Mathematics and Statistics, Beijing Institute of Technology, Beijing 100081, China;
%\email{zhengtao@amss.ac.cn}}

\subjclass{53C20; 53C21; 58A10; 58A14}

\begin{abstract}
In this manuscript, we investigate a priori estimates for the solution to the Dirichlet eigenvalue problem for a broad class of concave elliptic Hessian operators of the form
\[
F(D^2u)=-\Lambda u \quad \textrm{in} \, \Omega, \qquad u=0 \quad \textrm{on} \, \partial \Omega.
\]
These operators encompass the Monge-Amp\`ere operator, the $k$-Hessian operators, and the $p$-Monge-Amp\`ere operators. We impose a  fairly  mild constraint on the operator $F$,  allowing us to demonstrate the existence of the first nonzero eigenvalue and its  corresponding $\Gamma$-admissible eigenfunction on the smooth, strictly $\Gamma$-convex domain $\Omega\subset \mathbb{R}^{n}$. Furthermore,  we prove that the eigenfunction $u_{1}$ belongs to $C^{\infty}(\Omega) \cap C^{1,1}(\overline{\Omega})$. As an application, we prove that every invariant G\r{a}rding-Dirichlet operator admits a unique first nonzero eigenvalue. Finally, a bifurcation-type theory for these operators is also established.
\end{abstract}

\keywords{Concave elliptic Hessian operators; Dirichlet eigenvalue problem; A priori estimates}

\section{Introduction}
Let $\Gamma \subset \mathbb{R}^n$ (for $n \geq 2$) be an open \textit{convex invariant cone}, and let $f : \Gamma \rightarrow \mathbb{R}$ be a \textit{concave elliptic Hessian operator} (as defined in Definitions \ref{CIC} and \ref{elliptic hessian operator}). We define the corresponding set of real symmetric $n\times n$ matrices characterizing as
\[\mathscr{M}(\Gamma)=\{A\in \mathrm{Sym}^2(\mathbb{R}^n)\, | \, \lambda(A)\in \Gamma\}\] 
where $\lambda(A)$ denotes the eigenvalue vector of $A$. For an arbitrary bounded domain $\Omega\subset \mathbb{R}^n$, a smooth function $u$ in $\Omega$ for which $D^2u(x)\in \mathscr{M}(\Gamma)$ for all $x\in \Omega$ is said to be $\Gamma$-admissible. For notational convenience, we set
\[
F(A)=f(\lambda(A))\,, \qquad A\in \mathscr{M}(\Gamma).
\]
Our main objective is to establish the existence and regularity of a $\Gamma$-admissible eigenpair $(u_1,\Lambda_{1})$ 
satisfying the following Dirichlet eigenvalue problem
\begin{equation}\label{eigenvalue problem}
\left\{\begin{array}{ll}
\begin{matrix}
\begin{aligned}
F(D^2u) =&\, -\Lambda u  \\
 u= &\, 0
\end{aligned} 
&
\begin{aligned}
\quad &\mbox{in } \Omega,  \\
\quad &\mbox{on } \partial\Omega,
\end{aligned} 
\end{matrix}
\end{array}\right.
\end{equation}
where $D^2u$ denotes the Hessian matrix of $u$.  A notable characteristic of this equation is its degeneracy near the boundary, as the right-hand side tends to zero.

Fully nonlinear Dirichlet eigenvalue problems form a deep and interdisciplinary area, bridging fully nonlinear PDE theory, spectral geometry, and the calculus of variations. Their complexity stems fundamentally from the nonlinearity of the operator $F$—a phenomenon with no analogue in the classical spectral theory of the Laplacian. Furthermore, the Dirichlet eigenvalue problem for such operators reveals deep geometric content through its connection to the Brunn-Minkowski inequality in convex geometry. For further reading, we refer to \cite{CF20, LMX, Salani05}, among other works.

The bibliography related to the Dirichlet eigenvalue problem for fully nonlinear second order operators is very wide. A canonical example is the Dirichlet eigenvalue problem for the Monge-Amp\`ere operator. This problem consists in finding a nontrivial pair  $(u_1,\Lambda_1)\in C(\overline{\Omega})\times \mathbb{R}$ such that
\begin{equation*}
\left\{\begin{array}{ll}
\begin{matrix}
\begin{aligned}
\det(D^2u) =&\, (-\Lambda u)^{n}  \\
 u= &\, 0
\end{aligned} 
&
\begin{aligned}
\quad &\mbox{in } \Omega,  \\
\quad &\mbox{on } \partial\Omega.
\end{aligned} 
\end{matrix}
\end{array}\right.
\end{equation*}
A landmark systematic study of this problem was undertaken by Lions~\cite{L85}, who established the existence of the first eigenvalue $\Lambda_1$, proved the regularity $u_1 \in C^{\infty}(\Omega) \cap C^{1,1}(\overline{\Omega})  $ for the corresponding eigenfunction, and provided a variational characterization of $\Lambda_1$.  Subsequent contributions by Tso~\cite{Tso}, Hong–Huang–Wang~\cite{HHW11}, Le–Savin~\cite{LS17}, and Le~\cite{L18,L25} established existence, uniqueness, and geometric regularity properties under appropriate convexity constraints.

Wang's seminal work \cite{W94}  extended the eigenvalue framework to the $k$-Hessian operators
\[
\sigma_{k}(D^2u)=\sum_{1 \leq i_1 < i_2 < \cdots < i_{k} \leq n} \lambda_{i_1} \lambda_{i_2}  \cdots \lambda_{i_k}\,, \qquad \lambda=\lambda(D^2u)\,,
\]
unifying Monge-Amp\`ere theory with a broader class of fully nonlinear elliptic operators. Recent progress includes the study of the $p$-Monge-Amp\`ere operators 
\begin{equation*}
\mathcal{M}_{p}(D^2u)= \prod_{1 \leq i_1 < i_2 < \cdots < i_{p} \leq n} (\lambda_{i_1} + \lambda_{i_2} + \cdots + \lambda_{i_p})\,, \qquad \lambda=\lambda(D^2u)\,,
\end{equation*}
with the critical case $p = n-1$ resolved in~\cite{Huang,M21}, and  completion of all integer cases $1 \leq p \leq n$  by the author and Zheng \cite{ZZ25}.  
 
We now describe the several fundamental structures for $f$. Let $\Gamma$ be a convex invariant cone with vertex at the origin satisfying 
\[
\Gamma_{n}=\{\lambda\in \mathbb{R}^n\, :\, \lambda_{i}>0, \, i=1,\cdots,n\}\subset \Gamma \subset \{\lambda\in \mathbb{R}^n\, :\, \sum_{i=1}^{n}\lambda_{i}>0\}=\Gamma_{1}\,.
\]
We shall assume further that $f$ satisfies the following condition on $\Gamma$.

\begin{definition}
We say an elliptic operator $f: \Gamma \rightarrow \mathbb{R}$ satisfies the Condition {\bf (T)} if there exists a uniform  constant  $\mathscr{T}>0$ such that  
\begin{equation}\label{det}
\prod_{i=1}^{n}\frac{\partial f}{\partial \lambda_{i}}\geq  {\mathscr{T}}\,.
\end{equation}  
\end{definition}
Heuristically, Condition {\bf (T)} allows us exploit an established upper bound for $\{\frac{\partial f}{\partial \lambda_{i}}\}_{i=1}^{n}$ to derive a lower bound, thereby providing two-sided control. This type of argument is common in a priori estimates when the maximum principle is applied to the linearized operator. We note that this hypothesis is also employed in the recent study by Guo-Phong \cite{GP24}, which investigates $L^{\infty}$ estimates for fully nonlinear equations on non-K\"ahler manifolds.

Our first primary result of this manuscript is the following existence theorem.
\begin{theorem}\label{eigenvalue theorem}
Let $\Omega$ be a smooth bounded and strictly $\Gamma$-convex domain within $\mathbb{R}^n$. Assume $F$ is a concave elliptic Hessian operator on an open convex cone $\mathscr{M}(\Gamma)\subset \mathrm{Sym}^2(\mathbb{R}^n)$ that satisfies the Condition {\bf (T)}.  Then, there exists a positive number $\Lambda_{1}=\Lambda_{1}(\Omega)$ such that \eqref{eigenvalue problem} possesses a $\Gamma$-admissible solution $u_{1}\in C^{\infty}(\Omega)\,\cap\, C^{1,1}(\overline{\Omega})$.
Furthermore, the solution is unique up to scalar multiplication.
\end{theorem}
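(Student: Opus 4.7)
My plan is to combine a priori estimates with an approximation/continuation argument, following the general strategy introduced by Lions~\cite{L85} for the Monge-Amp\`ere operator and extended by Wang~\cite{W94} to the $k$-Hessian setting. For each $\Lambda \geq 0$, I would first study the auxiliary Dirichlet problem
\[
F(D^2 u_\Lambda) = -\Lambda u_\Lambda - 1 \quad \text{in } \Omega, \qquad u_\Lambda = 0 \quad \text{on } \partial\Omega,
\]
which is nondegenerate (the right-hand side is bounded away from $0$) and solvable for small $\Lambda$ by the method of continuity, starting from the classical solvability at $\Lambda = 0$ on strictly $\Gamma$-convex domains. Setting
\[
\Lambda_1 := \sup\{\Lambda > 0 : \text{a } \Gamma\text{-admissible solution } u_\Lambda \text{ of the auxiliary problem exists}\},
\]
barriers built from $\Gamma$-admissible quadratic functions would show $\Lambda_1 > 0$, while comparison with the first Laplace eigenfunction (using $\Gamma \subset \Gamma_1$, so that $\mathrm{tr}(D^2 u)>0$ for $\Gamma$-admissible $u$) would force $\Lambda_1 < \infty$.

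The heart of the proof is deriving a priori bounds that are uniform in $\Lambda < \Lambda_1$. Normalizing by $m_\Lambda := -\inf_\Omega u_\Lambda$ and setting $v_\Lambda = u_\Lambda / m_\Lambda$, I would establish in order: (a) $C^0$ control via comparison with smooth $\Gamma$-admissible super- and subsolutions adapted to the strict $\Gamma$-convexity of $\Omega$; (b) boundary and then global gradient estimates using barriers built from a strictly $\Gamma$-admissible defining function of $\partial\Omega$; (c) global $C^{1,1}$ bounds via a Pogorelov-type auxiliary function, where the concavity of $F$ supplies the usual good commutator terms and Condition \textbf{(T)} delivers the crucial lower bound on the individual $F^{ii}=\partial F/\partial A_{ii}$ once a two-sided bound on $\sum_i F^{ii}$ is in hand. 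On compact subsets of $\Omega$, the $C^{1,1}$ estimate combined with $-u$ being bounded away from $0$ makes the equation uniformly elliptic, so that Evans--Krylov plus Schauder bootstrap yield $C^\infty_{\mathrm{loc}}$ regularity.

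To extract an eigenpair, I would pick $\Lambda_k \uparrow \Lambda_1$ and consider the normalized solutions $v_k = u_{\Lambda_k}/m_{\Lambda_k}$. An implicit-function/continuation argument shows $m_{\Lambda_k} \to \infty$: otherwise the family $\{u_{\Lambda_k}\}$ would remain bounded and the continuity method, fed by the uniform estimates above, would extend solvability strictly past $\Lambda_1$, contradicting maximality. Dividing the equation by $m_{\Lambda_k}$ and applying Arzel\`a--Ascoli to the uniformly $C^{1,1}$-bounded family $\{v_k\}$, a subsequence converges in $C^{1,\alpha}(\overline{\Omega})$ to a nontrivial $u_1$ satisfying $F(D^2 u_1) = -\Lambda_1 u_1$ with $\inf u_1 = -1$, hence a $\Gamma$-admissible eigenfunction in $C^{\infty}(\Omega)\cap C^{1,1}(\overline{\Omega})$. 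Uniqueness up to a positive scalar follows by the standard concavity argument: for two eigenfunctions $u_1,w$ with the same $\Lambda_1$, apply concavity of $F$ along a convex combination and invoke the strong maximum principle for the linearized operator, whose nondegeneracy in the interior is again guaranteed by Condition \textbf{(T)}.

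The main obstacle I anticipate is the degeneracy of the equation at $\partial\Omega$: since $u_1=0$ there, the right-hand side $-\Lambda_1 u_1$ vanishes on the boundary, and the equation loses uniform ellipticity, obstructing standard Schauder/Krylov estimates up to $\partial\Omega$ and threatening the global $C^{1,1}$ bound. It is precisely here that Condition \textbf{(T)} is indispensable: a lower bound on $\prod_i F^{ii}$ converts the upper bound on $\sum_i F^{ii}$ (coming from concavity and the equation) into two-sided control of each $F^{ii}$, which is exactly the ingredient needed to run the boundary barrier and Pogorelov computations under the vanishing right-hand side. Balancing the boundary degeneracy against the quantitative nondegeneracy provided by \textbf{(T)} is the delicate technical core of the argument.
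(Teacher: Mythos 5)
Your proposal follows essentially the same route as the paper: Lions' approximation strategy via the perturbed family of nondegenerate Dirichlet problems, definition of $\Lambda_1$ as the supremum of solvable parameters, two-sided bounds on $\Lambda_1$ (upper bound from comparison with the Laplace eigenvalue via $f(\lambda)\leq\frac{1}{n}\sigma_1(\lambda)$), blow-up of $\|u_\lambda\|_{C^0}$ at the threshold, normalization and Arzel\`a--Ascoli, and uniqueness by concavity, homogeneity, the strong maximum principle and Hopf's lemma for the linearized operator. Your identification of the boundary degeneracy and the role of Condition \textbf{(T)} in converting the upper bound on $\sum_i F^{ii}$ into a lower bound on each $F^{ii}$ is exactly the mechanism the paper exploits in the gradient and Hessian estimates.

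One concrete error as written: your auxiliary problem has right-hand side $-\Lambda u_\Lambda - 1$, which equals $-1$ on $\partial\Omega$ (and is merely $\geq -1$ in $\Omega$), whereas a $\Gamma$-admissible solution forces $F(D^2u)=f(\lambda(D^2u))>0$; so that family admits no $\Gamma$-admissible solution and is not ``bounded away from $0$'' in the useful direction. The sign must be $F(D^2u_\Lambda)=1-\Lambda u_\Lambda\geq 1$, as in the paper, which is what makes the approximating problems uniformly nondegenerate and lets the comparison/subsolution arguments (e.g.\ $C_\lambda u_0$ as a subsolution via $1$-homogeneity) go through. With that correction the argument is the paper's.
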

\begin{remark}\label{example}
\begin{itemize}\itemsep 0.5em
\item[(1)] The positive constant $\Lambda_{1}$ will be referred to as the first eigenvalue of the operator $F$, and $u_1$ will be referred to as its corresponding eigenfunction.
\item[(2)] Examples of $f$ satisfying Condition  {\bf (T)} including 
\begin{itemize}
\item[(i)] $(\Gamma_{k},\sigma_{k}^{\frac{1}{k}})$, the $k$-Hessian operator for $1\leq k\leq n$, where $\sigma_{k}$ is the $k$-th elementary symmetric polynomial 
\[
\sigma_{k}(\lambda)=\sum_{1\leq i_1<i_2<\cdots <i_{k}\leq n}\lambda_{i_1}\cdots \lambda_{i_{k}}
\]
and $\Gamma_{k}=\{\lambda\in \mathbb{R}^{n}\,|\, \sigma_{j}(\lambda)>0, \, j=1,\cdots, k\}.$ The proof of Condition {\bf (T)} can be found in the work of Wang \cite[p. 183]{Wang}.
\item[(ii)] $(\mathcal{P}_{p},\mathcal{M}^{1/\binom{n}{p}}_{p})$, the $p$-Monge-Amp\`ere operator for $1\leq p\leq n$, where $\mathcal{M}_{p}$ is a symmetric polynomial given by
\begin{equation*}
\mathcal{M}_{p}(\lambda)=\prod_{1\leq i_1<i_2<\cdots <i_{p}\leq n} (\lambda_{i_1}+\lambda_{i_2}+\cdots +\lambda_{i_p})\,.
\end{equation*}
and $\mathcal{P}_{p}=\{ \lambda\in \R^n \,: \,\lambda_{i_1}+\cdots +\lambda_{i_{p}}>0,\, 1\leq i_1<\cdots< i_{p}\leq n\}$.  The proof of Condition {\bf (T)} has been provided in \cite{D21,ZZ25}. 
\end{itemize} 
\item[(3)] Notably for all $n\geq k>l\geq 1$, the Hessian quotient equations  $(\Gamma_{k}, (\frac{\sigma_{k}}{\sigma_{l}})^{1/(k-l)})$  fail to meet the  Condition {\bf (T)}.
\end{itemize}
\end{remark}

In a recent work, Harvey and Lawson \cite{HL23} showed that the class of operators satisfying Condition \textbf{(T)} is unexpectedly broad, and actually includes all invariant G\r{a}rding-Dirichlet operators. Moreover, they established the following strict ellipticity property on the G\r{a}rding cone $\Gamma$:
\begin{equation}\label{GD}
f(\lambda+\tau)\geq f(\lambda)+\Big(\prod_{i=1}^{n}  \tau_i\Big)^{\frac{1}{n}} \qquad \text{for all } \lambda\in \Gamma\, \text{ and } \tau\in \Gamma_{n}\,.
\end{equation}
In Lemma \ref{GD 2} below, we prove that inequality \eqref{GD} implies \eqref{det}. Combining this with Theorem \ref{eigenvalue theorem} yields the following corollary.
\begin{corollary}
Let $\Omega$ be a smooth bounded and strictly $\Gamma$-convex domain within $\mathbb{R}^n$. Assume $F$ is an invariant G\r{a}rding-Dirichlet operator on an open convex cone $\mathscr{M}(\Gamma)\subset \mathrm{Sym}^2(\mathbb{R}^n)$.  Then, there exists a positive number $\Lambda_{1}=\Lambda_{1}(\Omega)$ such that \eqref{eigenvalue problem} possesses a $\Gamma$-admissible solution $u_{1}\in C^{\infty}(\Omega)\,\cap\, C^{1,1}(\overline{\Omega})$.
Furthermore, the solution is unique up to scalar multiplication.
\end{corollary}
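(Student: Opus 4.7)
The plan is to deduce the corollary as a direct consequence of Theorem~\ref{eigenvalue theorem}. The only non-trivial step is to verify that every invariant G\r{a}rding-Dirichlet operator satisfies Condition~{\bf (T)}; this is the content of Lemma~\ref{GD 2}, and it is the only place where real work is required. By the theorem of Harvey and Lawson \cite{HL23}, such an operator $F$ on $\mathscr{M}(\Gamma)$ is concave, elliptic, and satisfies \eqref{GD}; once \eqref{det} is established, all the hypotheses of Theorem~\ref{eigenvalue theorem} are in force.

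To carry out the proof of Lemma~\ref{GD 2}, I would fix $\lambda\in\Gamma$ and $\tau\in\Gamma_n$ and exploit concavity of $f$ in tandem with \eqref{GD}: for sufficiently small $t>0$ one has $\lambda+t\tau\in\Gamma$ and
\[
t\sum_{i=1}^{n}\frac{\partial f}{\partial\lambda_i}(\lambda)\,\tau_i \;\geq\; f(\lambda+t\tau)-f(\lambda) \;\geq\; t\Bigl(\prod_{i=1}^{n}\tau_i\Bigr)^{\!1/n}.
\]
Dividing by $t$ leaves the infinitesimal inequality $\sum_i f_i(\lambda)\tau_i\geq \bigl(\prod_i\tau_i\bigr)^{1/n}$ valid for every $\tau\in\Gamma_n$, where $f_i:=\partial f/\partial\lambda_i$. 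Ellipticity of $f$ forces $f_i(\lambda)>0$ for each $i$, so I may specialize to the sharp choice $\tau_i=1/f_i(\lambda)$, which indeed lies in $\Gamma_n$. The left-hand side then collapses to $n$, the right-hand side to $\bigl(\prod_i f_i(\lambda)\bigr)^{-1/n}$, and rearranging yields $\prod_{i=1}^{n}f_i(\lambda)\geq n^{-n}$. This is precisely Condition~{\bf (T)} with $\mathscr{T}=n^{-n}$.

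With Condition~{\bf (T)} secured, Theorem~\ref{eigenvalue theorem} applied to $F$ on the strictly $\Gamma$-convex domain $\Omega$ supplies the eigenvalue $\Lambda_1>0$, the $\Gamma$-admissible eigenfunction $u_1\in C^{\infty}(\Omega)\cap C^{1,1}(\overline{\Omega})$, and its uniqueness up to positive scalar multiples. I do not anticipate a serious obstacle beyond checking that the Harvey-Lawson definition of an invariant G\r{a}rding-Dirichlet operator really does produce a concave elliptic Hessian operator on an open convex cone in the sense used by Theorem~\ref{eigenvalue theorem}; this is intrinsic to their framework, but it must be cited or verified carefully to justify the passage to \eqref{det} via Lemma~\ref{GD 2}.
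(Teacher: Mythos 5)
Your proposal is correct and follows essentially the same route as the paper: it reduces the corollary to Theorem~\ref{eigenvalue theorem} via Lemma~\ref{GD 2}, and your verification of Condition~{\bf (T)} (concavity gives $\sum_i f_i(\lambda)\tau_i\geq f(\lambda+\tau)-f(\lambda)\geq(\prod_i\tau_i)^{1/n}$, then specialize $\tau_i=1/f_i(\lambda)$ to get $\prod_i f_i\geq n^{-n}$) is exactly the paper's argument, with your small-$t$ rescaling being an inessential variant of taking $t=1$ directly.
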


To prove Theorem \ref{eigenvalue theorem}, it is necessary to investigate the following more general Dirichlet problem:
\begin{equation}\label{Dirichlet problem}
F(D^2u)=\psi(x,u)  \quad \textrm{in} \, \Omega, \qquad u=0 \quad \textrm{on} \, \partial \Omega.
\end{equation}
This problem represents a special case of the form:
\begin{equation}\label{Dirichlet problem II}
F(D^2u) = \psi(x,u,\nabla u)\,.
\end{equation}
The Dirichlet problem for equations of the type \eqref{Dirichlet problem II} has received considerable attention over the past four decades. Under various assumptions, contributions have been made by  Ivochkina \cite{Ivo}, Cafferalli-Nirenberg-Spruck \cite{CNS85}, and subsequent works in \cite{CD,CW,D21,Guan94,JW24,Li90,TW99,W94}, among many others. Trudinger \cite{Tru87} investigated the Dirichlet and Neumann problems in balls for the degenerate case, while Urbas \cite{Urbas} studied nonlinear oblique derivative boundary value problems in two dimensions.

The second main result of this manuscript addresses the solvability of the Dirichlet problem \eqref{Dirichlet problem} in the degenerate case.
\begin{theorem}\label{existence theorem}
Let $\Omega\subset \mathbb{R}^n$ be a bounded  domain with $\partial\Omega \in C^{3,1}$.  Assume $F$ is a concave elliptic Hessian operator on $\mathscr{M}(\Gamma)$ that satisfies the Condition {\bf (T)}. Suppose that $\psi\in C^{1,1}(\overline{\Omega}\times (-\infty,0],\mathbb{R})$ is non-negative and satisfies
\begin{equation*}
\psi(x,u)>0  \qquad \textrm{ on } \{x\in \Omega:u(x)<0\}\,.
\end{equation*}
If the Dirichlet problem \eqref{Dirichlet problem} possesses a subsolution $\underline{u}$ with $\underline{u}|_{\partial\Omega}\leq 0$ and a supersolution $\overline{u}$ with $\overline{u}|_{\partial\Omega}= 0$, then it admits a solution $u\in C^{\infty}(\Omega)\cap C^{1,1}(\overline{\Omega})$.
\end{theorem}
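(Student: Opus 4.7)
\medskip
\noindent\textbf{Proof proposal for Theorem \ref{existence theorem}.}

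The plan is to reduce the degenerate problem \eqref{Dirichlet problem} to a family of non-degenerate Dirichlet problems that can be solved by the continuity method, and then to pass to the limit using uniform $C^{1,1}(\overline{\Omega})$ estimates. For $\varepsilon > 0$ small, I would consider the perturbed problem
\[
F(D^2 u_{\varepsilon}) = \psi(x, u_{\varepsilon}) + \varepsilon\quad\text{in }\Omega, \qquad u_{\varepsilon} = -\varepsilon \quad\text{on }\partial\Omega,
\]
whose right-hand side is strictly positive on $\overline{\Omega}$. The shifted functions $\underline{u} - \varepsilon$ and $\overline{u} - \varepsilon$ serve as sub- and supersolutions for the perturbed problem (after possibly further adjusting $\underline{u}$ by a small additive constant to ensure $\underline{u}-\varepsilon \leq -\varepsilon$ on $\partial\Omega$), so the comparison principle for concave elliptic Hessian operators pins $u_{\varepsilon}$ between them.

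For each fixed $\varepsilon > 0$, the existence of a smooth $\Gamma$-admissible $u_{\varepsilon}$ will be obtained by the standard continuity method along a path $F(D^2 u) = (1-t)\, F(D^2(\underline{u}-\varepsilon)) + t\,(\psi(x,u)+\varepsilon)$, $t\in[0,1]$. Openness follows from the implicit function theorem in $C^{2,\alpha}$ once the linearization is invertible (which holds because ellipticity and concavity of $F$, together with $\psi_u \leq 0$ wherever one can arrange it, give a maximum principle for the linearized operator). Closedness rests on the a priori chain: a $C^{0}$ bound from sub/supersolutions; a $C^{1}$ bound via the maximum principle applied to $|\nabla u|^2$ and the subsolution as a boundary barrier; a boundary $C^{2}$ bound, where tangential and mixed derivatives are controlled through a $\Gamma$-admissible barrier constructed from $\underline{u}$ and a power of the signed distance function, while the double normal derivative is bounded by inverting Condition \textbf{(T)} (which forces a uniform lower bound $\prod f_{\lambda_i}\geq \mathscr{T}$, giving two-sided control of the coefficients of the linearization); an interior $C^{2}$ bound via a Pogorelov-type auxiliary function of the form $\eta(x)\, w_{\xi\xi}\, e^{\phi(|\nabla u|^2)}$ against $\underline{u}-u$; and finally a $C^{2,\alpha}$ bound from the Evans–Krylov theorem, which applies because $F$ is concave and the preceding steps produce uniform ellipticity on the solution.

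The final step is to let $\varepsilon \downarrow 0$. The delicate point is to ensure that every estimate in the chain above is \emph{uniform in} $\varepsilon$. The $C^{0}$ and $C^{1}$ bounds are clearly uniform, and the interior Pogorelov argument depends only on the oscillation of $u_{\varepsilon}$ and of $\underline{u}$. The boundary $C^{2}$ bound is the subtle one: although the right-hand side $\psi(\cdot,u_{\varepsilon})+\varepsilon$ does \emph{not} degenerate uniformly as $\varepsilon\to 0$, we only need a uniform bound rather than any limit of second derivatives on $\partial\Omega$, and this is exactly what Condition \textbf{(T)} is tailored to provide, since it converts the already-available trace bound $\sum f_{\lambda_i}\leq C$ into a uniform lower bound on each $f_{\lambda_i}$ via $f_{\lambda_j} \geq \mathscr{T} / \prod_{i\neq j} f_{\lambda_i} \geq \mathscr{T}/C^{n-1}$. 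Once a uniform $C^{1,1}(\overline{\Omega})$ bound is in hand, Arzelà–Ascoli produces a limit $u\in C^{1,1}(\overline{\Omega})$ of a subsequence $u_{\varepsilon_k}$, which satisfies \eqref{Dirichlet problem} in the viscosity sense and is $\Gamma$-admissible. Interior $C^\infty$ regularity then follows on each compact $K\Subset\Omega$, where $\psi(x,u)\geq \psi_K>0$ by the assumption that $\psi>0$ on $\{u<0\}$, so $u$ solves a uniformly elliptic concave equation and Evans–Krylov plus Schauder bootstrap yield smoothness.

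The main obstacle I anticipate is precisely the uniformity in $\varepsilon$ of the boundary second-order estimate, and in particular the construction of barriers for the mixed tangential-normal derivatives without a $\Gamma$-convexity hypothesis on $\partial\Omega$. Here the subsolution $\underline{u}$ must do the work that $\Gamma$-convexity does in Theorem \ref{eigenvalue theorem}, and the bound must be insensitive to the size of $\psi$ near the boundary; this is where Condition \textbf{(T)} is essential and where most of the technical effort of the proof will be concentrated.
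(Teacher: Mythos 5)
Your overall architecture — $C^0$ bounds from the sub/supersolution pair, $C^1$ and global $C^2$ estimates by maximum-principle arguments in which Condition \textbf{(T)} converts an upper bound on the coefficients $F^{ii}$ into a lower bound, boundary $C^2$ estimates via barriers built from $\underline{u}$, Evans--Krylov plus Schauder for interior smoothness, and a final limit — matches the paper's (Propositions \ref{gradient estimate}, \ref{int hessian estimate}, \ref{boundary C2} and Remark \ref{higher order}). Two of your choices differ only in technique: the paper obtains the interior second-order bound by a global maximum-principle argument for $\log\lambda_1(D^2u)+\varphi(\tfrac12|Du|^2)+\tfrac12|x|^2$, reducing $\sup_{\Omega}|D^2u|$ to $\sup_{\partial\Omega}|D^2u|$, rather than by a Pogorelov-type cutoff; and it runs the estimates directly on the degenerate right-hand side rather than on $\psi+\varepsilon$.

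There are, however, genuine gaps. (i) Your openness step needs the linearization $\sum F^{ij}\partial_i\partial_j-t\psi_u$ to be invertible, for which you invoke ``$\psi_u\le 0$ wherever one can arrange it''; but the theorem assumes no monotonicity of $\psi$ in $u$, and with $\psi_u>0$ somewhere the continuity method stalls. The sub/supersolution hypothesis is there precisely to replace this by a monotone iteration (solve $F(D^2u_{k+1})-Ku_{k+1}=\psi(x,u_k)-Ku_k$ with $K\ge\sup|\psi_u|$) or a Perron-type argument, and you need to say which. Relatedly, $\underline{u}-\varepsilon$ is not automatically a subsolution of $F(D^2u)=\psi(x,u)+\varepsilon$: from $F(D^2\underline{u})\ge\psi(x,\underline{u})$ you control neither the shift in the second argument of $\psi$ nor the added $\varepsilon$. (ii) Most importantly, the double-normal boundary estimate — the crux of the proof — is not supplied by ``inverting Condition \textbf{(T)}'': the trace bound $\sum_i F^{ii}\le C$ that you call ``already available'' only arises at interior maximum points of specific test functions (where a term $a\mathcal{F}$ appears with a favorable sign) and is not a priori available at boundary points where $u_{nn}$ is to be estimated. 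The paper instead runs Trudinger's argument: it studies $\delta_R=\min_{\partial\Omega}\bigl[f(\lambda'(\tilde{U}),R)-\psi\bigr]$, splits into the cases $\lim_{\lambda_n\to\infty}f(\lambda',\lambda_n)=\infty$ or $<\infty$, and in the second case exploits the concavity of the limit operator $\tilde{F}$ together with a Hopf-lemma barrier built from $\underline{u}$. Condition \textbf{(T)} enters the boundary analysis only through Guan's barrier $v=(u-\underline{u})+td-\tfrac{N}{2}d^2$ satisfying $Lv\le-\epsilon-\epsilon\sum_iF^{ii}$, which handles the mixed derivatives, not $u_{nn}$ itself. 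Without this step your chain of estimates is incomplete.
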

We recall that a  $\Gamma$-admissible function $v$ of class $C^{1,1}$ is referred to be a subsolution (resp. supersolution) of \eqref{Dirichlet problem} if
\begin{equation*}
\left\{\begin{array}{ll}
\begin{matrix}
\begin{aligned}
F(D^2v) \geq &\quad (\textrm{resp.} \leq )\quad \psi(x,v)  \\
 v\leq &\quad (\textrm{resp.} \geq )\quad 0
\end{aligned} 
&
\begin{aligned}
\quad &\mbox{in } \Omega,  \\
\quad &\mbox{on } \partial\Omega.
\end{aligned} 
\end{matrix}
\end{array}\right.
\end{equation*}
\medskip

The present manuscript is organised as follows: In Section 2, we revisit the concept of concave elliptic Hessian operators and compile a list of well-known properties concerning the derivatives of the operator $F$. 
In Sections 3 and 4, we establish both the global gradient estimate and the global second-order estimate for the problem defined in \eqref{Dirichlet problem}. These estimates enable us to demonstrate the existence result outlined in Theorem \ref{existence theorem}. 
In Section 5 we  conclude the proof of eigenvalue problem \eqref{eigenvalue problem} by employing the  Lions' approximation strategy. Finally, Section \ref{Application to the bifurcation theorem} is devoted to establishing a connection between Theorem \ref{existence theorem} and the bifurcation-type theory for a class of concave elliptic Hessian operators that satisfying Condition {\bf (T)}.
\medskip

\noindent {\bf Acknowledgements.}This work was supported by the National Natural Science Foundation of China  (Grant No. 12401065) and Initial Scientific Research Fund of Young Teachers in Hefei university of Technology  (Grant No. JZ2024HGQA0119). The author is deeply grateful to his thesis advisor Prof. Xi Zhang for his   constantly guidance and encouragement. He also wishes to thank Dr. Xinqun Mei for kindly bringing the reference \cite{Urbas91} to his attention, and Prof. Jianchun Chu for his valuable comments and suggestions.

\section{Preliminaries}\label{Preliminaries}
\subsection{Convex invariant cones and concave elliptic Hessian operators}
\begin{definition}\label{CIC}
An open cone $\Gamma\subset \mathbb{R}^{n}$  is said to be invariant if the following conditions are satisfied:
\begin{itemize}
\item[(i)] $\Gamma+\Gamma_{n}\subset \Gamma$.
\item[(ii)] $\Gamma$ remains unchanged under permutations of its coordinates.
\end{itemize}
Additionally, if $\Gamma$ is convex, it is referred to as a convex invariant cone.
\end{definition}
The following is the concept of a (strictly) $\Gamma$-convex cone that is associated with such cones.
\begin{definition}\label{p convex}
Let $\Omega$ be an open bounded domain in $\mathbb{R}^n$ with smooth boundary $\partial\Omega$. We say $\Omega$ is strictly $\Gamma$-convex if there exists a large number $R$ such that at every point $x\in \partial\Omega$
\begin{equation*}
(\kappa_1,\cdots,\kappa_{n-1},R)\in \Gamma\,,
\end{equation*}
where $(\kappa_1,\cdots,\kappa_{n-1})$ are the principle curvatures of $\partial\Omega$ at $x$.
\end{definition}
We now introduce the terminology of concave elliptic Hessian operators.
\begin{definition}\label{elliptic hessian operator}
Let $\Gamma$ be a convex invariant cone. The map  $F: \mathscr{M}(\Gamma) \longrightarrow \mathbb{R}$
is referred to as a {\em concave elliptic Hessian operator} if the following conditions are satisfied:
\begin{itemize}\itemsep 0.5em
\item[(i)] $F$ is Hessian, i.e. there exists a $C^2$ smooth function $f:\Gamma\rightarrow \mathbb{R}$ such that
\[
F(A)=f(\lambda_1(A),\cdots,\lambda_{n}(A)),
\]
where $\lambda_{i}(A)$ denotes the $i$-th eigenvalue of $A$.
\item[(ii)] $F$ is strictly elliptic, i.e. $f_{i}(\lambda):=\frac{\partial f}{\partial \lambda_{i}}>0$ for every $i$.
\item[(iii)] $F$ is concave, i.e. the matrix $\big(\frac{\partial^2 f}{\partial\lambda_{i}\partial\lambda_{j}}\big)$ is positive semi-definite.
\item[(iv)]  $f(\lambda)>0$ for all $\lambda\in \Gamma$ and $f(1,\cdots,1)=1$.
\item[(v)] $F$ is homogeneous of degree one, i.e. $f(t\lambda)=tf(\lambda)$ for every $t>0$ and for all $\lambda\in \Gamma$.
\end{itemize}
\end{definition}

It is noteworthy that the properties (i)–(v) are common to all examples presented in Remark \ref{example}.  The following lemma establishes that Condition \textbf{(T)} is satisfied by every invariant G\r{a}rding-Dirichlet operator. 
\begin{lemma}\label{GD 2}
Let $f:\Gamma\rightarrow \mathbb{R}$ be a concave elliptic operator. Assume that there exists a constant $c>0$ such that
\begin{equation}\label{inequ A}
f(\lambda+\tau)-f(\lambda)\geq c(\tau_1\cdots \tau_{n})^{\frac{1}{n}}, \qquad \text{for all }  (\lambda,\tau)\in \Gamma\times \Gamma_{n}.
\end{equation}
Then there exists a constant $c'>0$ such that
\begin{equation}\label{inequ B}
\prod_{i=1}^{n}f_{i}(\lambda)\geq c', \qquad \text{for all }  \lambda\in \Gamma.
\end{equation}
Moreover,  the converse also holds.
\end{lemma}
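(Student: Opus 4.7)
The plan is to show both implications via the arithmetic–geometric mean inequality, with one direction extracted by linearization and the other recovered by integrating along a line segment.

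For the forward direction $(\ref{inequ A}) \Rightarrow (\ref{inequ B})$, I would linearize the assumption. Fix $\lambda\in\Gamma$ and $a\in\Gamma_n$; apply \eqref{inequ A} with $\tau=ta$, divide by $t>0$, and let $t\to 0^+$ using smoothness of $f$ to obtain
\[
\sum_{i=1}^{n} f_i(\lambda)\, a_i \;\geq\; c\, (a_1\cdots a_n)^{1/n}\qquad\text{for all } a\in\Gamma_n.
\]
By strict ellipticity the vector $a^*=(1/f_1(\lambda),\dots,1/f_n(\lambda))$ lies in $\Gamma_n$, and substituting $a=a^*$ kills the left-hand side to exactly $n$. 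This forces
\[
n \;\geq\; c\,\Big(\prod_{i=1}^{n} f_i(\lambda)\Big)^{-1/n},
\]
which yields $\prod_{i=1}^n f_i(\lambda)\geq (c/n)^n$, proving \eqref{inequ B} with $c'=(c/n)^n$. The choice $a=a^*$ is exactly the minimizer one obtains by Lagrange multipliers for $\sum f_i a_i$ subject to $\prod a_i$ fixed, so this is the sharp way to extract the product from the sum.

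For the converse $(\ref{inequ B}) \Rightarrow (\ref{inequ A})$, I would invoke the fundamental theorem of calculus along the segment from $\lambda$ to $\lambda+\tau$:
\[
f(\lambda+\tau)-f(\lambda) \;=\; \int_0^{1}\sum_{i=1}^{n} f_i(\lambda+s\tau)\,\tau_i\,ds.
\]
The invariance property $\Gamma+\Gamma_n\subset\Gamma$ guarantees $\lambda+s\tau\in\Gamma$ for every $s\in[0,1]$, so the integrand is well defined and $f_i(\lambda+s\tau)>0$. Applying AM--GM pointwise gives
\[
\sum_{i=1}^{n} f_i(\lambda+s\tau)\,\tau_i \;\geq\; n\Big(\prod_{i=1}^{n} f_i(\lambda+s\tau)\Big)^{1/n}\Big(\prod_{i=1}^{n}\tau_i\Big)^{1/n} \;\geq\; n(c')^{1/n}(\tau_1\cdots\tau_n)^{1/n},
\]
using \eqref{inequ B} in the last step. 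Integrating over $s\in[0,1]$ yields \eqref{inequ A} with $c=n(c')^{1/n}$.

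Neither direction is technically difficult; the only subtlety is making sure that the auxiliary vectors stay in the cones where the hypotheses are applicable. In the forward direction this amounts to checking that $a^*\in\Gamma_n$, which is immediate from strict ellipticity, and in the reverse direction to checking that the segment $\{\lambda+s\tau:s\in[0,1]\}$ lies in $\Gamma$, which follows from the defining property $\Gamma+\Gamma_n\subset\Gamma$ of invariant cones. Concavity of $f$ is in fact not needed for either implication; only the $C^2$ smoothness and strict ellipticity enter, which highlights that the equivalence is essentially an AM--GM identity in disguise.
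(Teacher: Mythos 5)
Your proof is correct, and both implications follow the same basic template as the paper's: reduce \eqref{inequ A} to the gradient inequality $\sum_i f_i(\lambda)\tau_i\geq c(\tau_1\cdots\tau_n)^{1/n}$ and test with $\tau_i=1/f_i(\lambda)$ for the forward direction, and integrate $\sum_i f_i(\lambda+s\tau)\tau_i$ along the segment for the converse. The differences are minor but worth noting. In the forward direction the paper obtains the gradient inequality from concavity (via $\sum_i f_i(\lambda)\tau_i\geq\int_0^1\sum_i f_i(\lambda+s\tau)\tau_i\,ds=f(\lambda+\tau)-f(\lambda)$), whereas you obtain it by setting $\tau=ta$, using the $1$-homogeneity of the geometric mean, and letting $t\to 0^+$; your route indeed dispenses with concavity entirely, as you observe, at the cost of invoking differentiability at $\lambda$ (available, since $f\in C^2$) and openness of $\Gamma$. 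In the converse the paper bounds $f(\lambda+\tau)-f(\lambda)$ below by each single term $f_i(\lambda+s_0\tau)\tau_i$ (via the mean value form of the integral) and multiplies the $n$ inequalities, getting $c=(c')^{1/n}$, while you apply AM--GM to the full sum pointwise in $s$ and get the slightly sharper constant $c=n(c')^{1/n}$; both are equally adequate for the lemma as stated. Your checks that $a^*\in\Gamma_n$ and that the segment stays in $\Gamma$ are exactly the points that need verifying, and they are handled correctly.
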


\begin{proof}
We first show that \eqref{inequ A} implies \eqref{inequ B}. By the concavity of $f$ together with inequality \eqref{inequ A}, we have
\[
\sum_{i=1}^{n}f_{i}(\lambda)\tau_{i}\geq \sum_{i=1}^{n}\int_{0}^{1} f_{i}(\lambda+s\tau)\tau_{i}ds=f(\lambda+\tau)-f(\lambda)\geq  c(\tau_1\cdots \tau_{n})^{\frac{1}{n}}\,.
\]
Choosing $\tau_{i} = \frac{1}{f_{i}(\lambda)}$ for each $i=1,2,\cdots,n$, we obtain
\[
n\geq c\Big(\prod_{i=1}^{n}f_{i}(\lambda)\Big)^{-\frac{1}{n}}\,.
\]
Rearranging this inequality yields \eqref{inequ B} with $c' = (c/n)^n$. 

Next, we prove that \eqref{inequ B} implies \eqref{inequ A}.  Using the integral mean value formula, there exists some $s_{0}\in [0,1]$ such that
\[
f(\lambda+\tau)-f(\lambda)=\sum_{i=1}^{n}\int_{0}^{1} f_{i}(\lambda+s\tau)\tau_{i}ds=\sum_{i=1}^{n} f_{i}(\lambda+s_{0}\tau)\tau_{i}\,.
\]
Since each term in the sum is nonnegative (as ensured by the properties of ellipticity and $\tau\in \Gamma_{n}$), it follows that
\[
f(\lambda+\tau)-f(\lambda)\geq f_{i}(\lambda+s_{0}\tau)\tau_{i} \qquad \textrm{ for all } i=1,\cdots,n.
\]
Multiplying these inequalities over $i$, we obtain
\[
\Big(f(\lambda+\tau)-f(\lambda) \Big)^{n}\geq (\tau_1\cdots \tau_{n}) \cdot \prod_{i=1}^{n} f_{i}(\lambda+s_{0}\tau)\,.
\]
Since  $\lambda+s_{0}\tau \in \Gamma$, we may apply \eqref{inequ B} to the product on the right, giving
\[
\Big(f(\lambda+\tau)-f(\lambda) \Big)^{n} \geq c' (\tau_1\cdots \tau_{n})\,.
\]
Taking the $n$-th root on both sides shows that \eqref{inequ A} holds with $c=(c')^{1/n}$.
\end{proof}

The following inequality, established by Urbas, plays a crucial role in our proof of Theorem \ref{eigenvalue theorem}.
\begin{lemma}\cite[Lemma 3.3]{Urbas91}
Let $\lambda=(\lambda_1,\cdots,\lambda_n)\in \Gamma$. Then we have
\begin{equation}\label{urbas}
f(\lambda_1,\cdots,\lambda_n)\leq \frac{1}{n}\sum_{i=1}^{n}\lambda_{i}\,.
\end{equation}
\end{lemma}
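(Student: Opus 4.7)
The plan is to exploit three ingredients at once: the symmetry of $f$ under permutations of its coordinates, its $1$-homogeneity together with the normalization $f(1,\dots,1)=1$, and the concavity of $f$, which gives a tangent-line upper bound at any chosen base point. The natural base point is $\mathbf{1}:=(1,\dots,1)$, which lies in $\Gamma_n \subset \Gamma$, and the strategy is to compute $\nabla f(\mathbf{1})$ explicitly and then write the concavity inequality there.

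First I would note that because $\Gamma$ is invariant under permutations of coordinates and $f$ depends on $\lambda$ in a permutation-symmetric way (it corresponds to the eigenvalues of a symmetric matrix), the partial derivatives $f_i(\mathbf{1})$ are all equal; call their common value $c$. Next, using the $1$-homogeneity of $f$ and differentiating $f(t\lambda)=tf(\lambda)$ at $t=1$ (Euler's identity),
\begin{equation*}
f(\lambda) \;=\; \sum_{i=1}^n f_i(\lambda)\,\lambda_i,
\end{equation*}
and evaluating at $\lambda=\mathbf{1}$ together with $f(\mathbf{1})=1$ yields $nc=1$, i.e.\ $f_i(\mathbf{1})=\frac{1}{n}$ for all $i$.

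Finally, the concavity of $f$ on the convex cone $\Gamma$ gives the standard tangent-hyperplane inequality
\begin{equation*}
f(\lambda) \;\leq\; f(\mathbf{1}) + \sum_{i=1}^n f_i(\mathbf{1})\,(\lambda_i-1) \;=\; 1 + \frac{1}{n}\sum_{i=1}^n(\lambda_i-1) \;=\; \frac{1}{n}\sum_{i=1}^n \lambda_i,
\end{equation*}
which is exactly \eqref{urbas}.

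There is no real obstacle here; the only point that warrants a line of justification is the symmetry of $f$, which follows because property (i) of Definition~\ref{elliptic hessian operator} writes $F$ as a symmetric function of the eigenvalues while property (ii) of Definition~\ref{CIC} ensures the domain $\Gamma$ is itself symmetric, so that $\nabla f(\mathbf{1})$ is forced to be a multiple of $\mathbf{1}$. Everything else is a direct combination of Euler's identity with the first-order concavity inequality.
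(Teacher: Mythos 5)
Your proof is correct: the paper itself gives no proof of this lemma (it is quoted from Urbas), and your argument via the concavity tangent-plane bound at $\mathbf{1}=(1,\dots,1)$ combined with Euler's identity and the normalization $f(\mathbf{1})=1$ is exactly the standard derivation used in that reference. The one point you rightly flag — that $\nabla f(\mathbf{1})$ is a multiple of $\mathbf{1}$ by permutation symmetry — is adequately justified, so nothing is missing.
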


\subsection{First and second derivatives of $F$}
To establish the required a priori estimates in later sections, we proceed by differentiating Equation \eqref{Dirichlet problem} once and twice. For complete proofs of these computations, we refer the reader to \cite{Andrews94, Gerhardt96, Spr}.
\begin{lemma}
Let $u\in C^4(\Omega)$ be a solution to $F(D^2u)=\psi$. Then 
\begin{align}
\label{sum}
\sum_{i,j=1}^{n}F^{ij}(D^2u)u_{ij}&=\psi\,,\\[1mm]
\label{first derivatives}
 \sum_{i,j=1}^{n}F^{ij}(D^2u)u_{ijt}&=D_{t}\psi\,,\\[1mm]
\label{second derivatives}
 \sum_{i,j=1}^{n}F^{ij}(D^2u)u_{ijtt}+\sum_{i,j,k,l=1}^{n}F^{ij,kl}(D^2u)u_{ijt}u_{klt}&=D_{tt}\psi
\end{align}
for every $t=1,\cdots,n$, where
\[
F^{ij}(D^2u)=\frac{\partial F(D^2u)}{\partial u_{ij}}\,, \qquad F^{ij,kl}(D^2u)=\frac{\partial^2F(D^2u)}{\partial u_{ij}\partial u_{kl}}\,.
\]
If $D^2u$ is diagonal, i.e. $u_{ij}=\lambda_{i}\delta_{ij}$, we then have 
\begin{equation}\label{structures of F}
F^{ij}=\delta_{ij}f_{i}(\lambda), \quad F^{ij,kl}=\delta_{ij}\delta_{kl}f_{ik}(\lambda)+\frac{f_{i}(\lambda)-f_{j}(\lambda)}{\lambda_{i}-\lambda_{j}}(1-\delta_{ij})\delta_{il}\delta_{jk}.
\end{equation}
\end{lemma}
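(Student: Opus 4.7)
The plan is to derive the three differential identities (2.2)--(2.4) by combining Euler's homogeneity relation with iterated chain rule differentiation, and then to establish the spectral formulas (2.5) via first- and second-order perturbation theory for eigenvalues at a diagonal matrix.

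For (2.2), I would invoke the homogeneity hypothesis in Definition \ref{elliptic hessian operator}(v), which gives $F(tA)=tF(A)$ for $t>0$; differentiating in $t$ at $t=1$ yields Euler's identity $\sum_{i,j}F^{ij}(A)A_{ij}=F(A)$. Evaluating at $A=D^2u$ and using the equation $F(D^2u)=\psi$ produces (2.2). For (2.3) I would differentiate $F(D^2u)=\psi$ in $x_t$ using the chain rule and the definition $F^{ij}=\partial F/\partial u_{ij}$. For (2.4) I would differentiate (2.3) once more in $x_t$; the extra bilinear term $\sum_{k,l}F^{ij,kl}u_{ijt}u_{klt}$ appears by applying the chain rule to $F^{ij}(D^2u)$.

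For the first formula in (2.5), at a diagonal matrix $A=\mathrm{diag}(\lambda_1,\ldots,\lambda_n)$ with distinct eigenvalues, first-order eigenvalue perturbation theory gives $\partial\lambda_k/\partial A_{ij}\big|_{A}=\delta_{ki}\delta_{kj}$; the chain rule $F^{ij}=\sum_k f_k(\lambda)\,\partial\lambda_k/\partial A_{ij}$ then collapses to $F^{ij}=\delta_{ij}f_i(\lambda)$. For repeated eigenvalues the conclusion is obtained by continuity, using the symmetry of $f$ in its arguments (forced by the invariance of $\Gamma$ under coordinate permutations together with the requirement that $F(A)=f(\lambda(A))$ be well defined).

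The second formula in (2.5) is the most delicate step. The plan is to compute $\partial^2 F(A+sE)/\partial s^2\big|_{s=0}$ at a diagonal $A$ using the Rayleigh--Schr\"odinger expansion
\[
\lambda_m(A+sE)=\lambda_m+sE_{mm}+s^2\sum_{p\neq m}\frac{E_{pm}^2}{\lambda_m-\lambda_p}+O(s^3),
\]
and to match the result against $\sum_{i,j,k,l}F^{ij,kl}(A)E_{ij}E_{kl}$. The contribution of the first-order shifts combined with the Hessian of $f$ produces the ``diagonal'' term $\delta_{ij}\delta_{kl}f_{ik}(\lambda)$, while the second-order shifts, contracted with $f_m(\lambda)$ and symmetrized, produce exactly the off-diagonal term $(f_i-f_j)/(\lambda_i-\lambda_j)$ on the index pattern $l=i,\ k=j,\ i\neq j$. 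The main technical obstacle is extending the quotient $(f_i-f_j)/(\lambda_i-\lambda_j)$ continuously across the resonant locus $\lambda_i=\lambda_j$: L'H\^opital, together with the symmetry and $C^2$ smoothness of $f$, identifies the limit with $f_{ii}(\lambda)-f_{ij}(\lambda)$, so that $F^{ij,kl}$ is a continuous function on all of $\mathscr{M}(\Gamma)$, and the closed-form expression in (2.5) is valid without case distinction.
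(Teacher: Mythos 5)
Your proposal is correct and follows the standard derivation that the paper itself defers to its references (Andrews, Gerhardt, Spruck): Euler's identity from degree-one homogeneity for \eqref{sum}, chain-rule differentiation of the equation for \eqref{first derivatives}--\eqref{second derivatives}, and first-/second-order eigenvalue perturbation at a diagonal matrix for \eqref{structures of F}, with the quotient $(f_i-f_j)/(\lambda_i-\lambda_j)$ extended across coincident eigenvalues by continuity using the symmetry of $f$. No gaps.
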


\section{First order estimates}\label{First order estimate}

In this section, we shall concentrate on the following global gradient estimate on the Dirichlet problem \eqref{Dirichlet problem}.
\begin{proposition}\label{gradient estimate}
Suppose $\Omega$ is a bounded and strictly $\Gamma$-convex domain in $\mathbb{R}^n$ with smooth boundary. Let $u\in C^{3}(\Omega)\cap C^{1}(\overline{\Omega})$ be a $\Gamma$-admissible solution to the Dirichlet problem \eqref{Dirichlet problem}. Then we have
\begin{equation*}
\sup_{\overline{\Omega}}|Du|\leq C_{1}\,,
\end{equation*}
where $C_{1}$ is a constant depends only on $p,$ $\Omega,$  $\|D_{x}\psi\|_{C^0}$, $\|D_{u}\psi\|_{C^0}$ and $C_{0}:= \|u\|_{C^0}$.
\end{proposition}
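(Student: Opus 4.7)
My plan is the classical two-step approach: first I reduce the global gradient bound to a boundary gradient bound via a maximum-principle argument on a well-chosen auxiliary function, and then I bound $|Du|$ on $\partial\Omega$ using a subsolution barrier that exploits the strict $\Gamma$-convexity of $\Omega$. For the interior reduction I consider $\Phi=\log|Du|^{2}+\mu(u)$, where $\mu$ is a one-variable function chosen so that $\mu''\geq(\mu')^{2}$ (for instance $\mu(u)=-\log(K-u)$ with $K>C_{0}$). Suppose $\Phi$ attains its maximum at an interior point $x_{0}$; rotating coordinates so that $D^{2}u(x_{0})$ is diagonal, by \eqref{structures of F} the linearised matrix $(F^{ij})$ is diagonal with positive entries $f_{i}(\lambda)$. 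The first-order condition $\Phi_{i}(x_{0})=0$ reads $u_{k}u_{ki}=-\tfrac{1}{2}\mu'(u)|Du|^{2}u_{i}$, and substituting the differentiated equation $F^{ij}u_{ijk}=\psi_{x_{k}}+\psi_{u}u_{k}$ from \eqref{first derivatives} into $F^{ij}\Phi_{ij}(x_{0})\leq 0$ produces, after standard algebra,
\[
0\,\geq\, \frac{2F^{ij}u_{ki}u_{kj}}{|Du|^{2}}+\bigl(\mu''(u)-(\mu'(u))^{2}\bigr)F^{ij}u_{i}u_{j}+\mu'(u)\psi+2\psi_{u}+\frac{2u_{k}\psi_{x_{k}}}{|Du|^{2}}.
\]
The first two summands are non-negative by positive semi-definiteness of $(F^{ij})$ and the choice of $\mu$, and the remaining terms are controlled by $\|D_{x}\psi\|_{C^{0}}$, $\|D_{u}\psi\|_{C^{0}}$, and $C_{0}$. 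For $|Du(x_{0})|$ sufficiently large this forces a contradiction, which after unwinding the transformation gives $\sup_{\overline\Omega}|Du|\leq C\bigl(1+\sup_{\partial\Omega}|Du|\bigr)$.

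For the boundary estimate I use that $u\equiv 0$ on $\partial\Omega$, so tangential derivatives vanish and $|Du|=|u_{\nu}|$ on $\partial\Omega$. Since $\Gamma\subset\Gamma_{1}$, every $\Gamma$-admissible function satisfies $\Delta u>0$, so the maximum principle yields $u\leq 0$ in $\Omega$ and therefore $u_{\nu}\geq 0$ on $\partial\Omega$. For the reverse inequality I construct a $\Gamma$-admissible subsolution $\underline w\in C^{2}(\overline\Omega)$ with $\underline w=0$ on $\partial\Omega$, $\underline w\leq 0$ in $\Omega$, and $F(D^{2}\underline w)\geq \sup_{\overline\Omega\times[-C_{0},0]}\psi$; the standard comparison principle (valid because $\psi(x,\cdot)$ is uniformly Lipschitz on $[-C_{0},0]$) then forces $\underline w\leq u$ in $\Omega$, whence $0\leq u_{\nu}\leq \underline w_{\nu}$ on $\partial\Omega$, which is precisely the required bound.

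The construction of $\underline w$ is the main obstacle, and it is where strict $\Gamma$-convexity plays its essential role. I take a $C^{4}$ defining function $\rho$ for $\Omega$ with $\rho<0$ in $\Omega$, $\rho=0$ and $|D\rho|=1$ on $\partial\Omega$, and set $\underline w=A(e^{B\rho}-1)$, so that $D^{2}\underline w=ABe^{B\rho}\bigl(D^{2}\rho+BD\rho\otimes D\rho\bigr)$. Strict $\Gamma$-convexity ensures that, for $B$ sufficiently large, the eigenvalues of the parenthesised matrix lie inside $\Gamma$ in a tubular neighbourhood of $\partial\Omega$; a globalisation argument (patching with a fixed admissible function deeper inside $\Omega$, or taking the sup with a suitable paraboloid) extends admissibility to all of $\Omega$. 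Combining the degree-one homogeneity and concavity of $F$ with the consequence of Condition \textbf{(T)} established in Lemma \ref{GD 2}, namely $f(\tau)\geq \mathscr{T}^{1/n}(\tau_{1}\cdots\tau_{n})^{1/n}$ for $\tau\in\Gamma_{n}$, produces a pointwise lower bound of the form $F(D^{2}\underline w)\geq c\,AB\,\mathscr{T}^{1/n}$ for some geometric constant $c>0$. Tuning $A$ and $B$ according to $\Omega$, the structural constants of $F$, and a $C^{0}$-bound for $\psi$ on $\overline\Omega\times[-C_{0},0]$ then makes $\underline w$ a genuine subsolution whose normal derivative on $\partial\Omega$ is controlled, and the proof is complete.
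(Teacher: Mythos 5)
Your boundary step is essentially the paper's: a barrier of the form $A(e^{B\rho}-1)$ (the paper uses $\tfrac1R(e^{-Rd}-1)$ with the distance function) built from strict $\Gamma$-convexity, comparison against $u$, and $|Du|=|u_\nu|\leq \underline w_\nu$ on $\partial\Omega$. The problem is the interior reduction, which as written does not close and, more importantly, never uses Condition \textbf{(T)} — the one structural hypothesis that makes the interior estimate work for this class of operators.

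Concretely: with $\mu(u)=-\log(K-u)$ you have $\mu''=(\mu')^{2}$ exactly, so the term $(\mu''-(\mu')^{2})F^{ij}u_iu_j$ vanishes identically, and your differential inequality reduces to
\[
0\;\geq\;\frac{2F^{ij}u_{ki}u_{kj}}{|Du|^{2}}+\mu'\psi+2\psi_u+\frac{2u_k\psi_{x_k}}{|Du|^{2}}\,,
\]
i.e.\ ``$0\geq(\text{nonnegative})+(\text{bounded below by }-2\|D_u\psi\|_{C^0}-o(1))$''. This is consistent for every value of $|Du(x_0)|$, so no contradiction arises and no bound on $|Du(x_0)|$ follows. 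Even if you repair the choice of $\mu$ so that $\mu''-(\mu')^{2}\geq c_0>0$ (the paper takes $\varphi=-\tfrac13\log(1+C_0-t)$, for which $\varphi''=3(\varphi')^{2}$), the resulting coercive term is $c_0\sum_i F^{ii}u_i^{2}$, which controls $|Du|^{2}$ only if the $F^{ii}$ are bounded away from zero — and for a general concave elliptic Hessian operator they are not. The paper's fix is to add $\tfrac a2|x|^{2}$ to the test function, which produces an extra $a\mathcal{F}=a\sum_iF^{ii}$ on the good side; the resulting inequality $c_0\sum F^{ii}u_i^{2}+\tfrac a2\sum F^{ii}\leq C'$ first gives the upper bound $F^{ii}\leq 2C'/a$, and then Condition \textbf{(T)} ($\prod_iF^{ii}\geq\mathscr{T}$) converts this into the lower bound $F^{ii}\geq\mathscr{T}(a/2C')^{\,n-1}$, which finally turns $\sum F^{ii}u_i^2\le C'/c_0$ into $|Du(x_0)|^{2}\leq C$. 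Your proposal is missing both ingredients (the $|x|^{2}$ term and the two-sided control of $F^{ii}$ via Condition \textbf{(T)}), so the interior step has a genuine gap. A secondary caveat: in the barrier step, the lower bound $f(\tau)\geq\mathscr{T}^{1/n}(\tau_1\cdots\tau_n)^{1/n}$ is only available for $\tau\in\Gamma_n$, whereas the eigenvalues of $D^{2}\rho+BD\rho\otimes D\rho$ need not all be positive for a strictly $\Gamma$-convex (not convex) domain, so the asserted bound $F(D^{2}\underline w)\geq c\,AB\,\mathscr{T}^{1/n}$ requires a separate justification.
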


\begin{proof}
To commence, we need to consider the estimate of the boundary gradient. For this purpose, we shall establish bounds for the solution $u$ from both below and above.

We initially observe that $\Gamma$-admissibility ensures subharmonicity. Namely, $\sigma_{1}(D^2u)\geq 0.$ Then, by the standard comparison principle, we see that $u\leq 0$ in $\Omega$. To obtain a lower bound for $u$, we need to further construct a smooth strictly $\Gamma$-admissible subsolution $\underline{u}$ to following Dirichlet problem 
\begin{equation*}
\left\{\begin{array}{ll}
\begin{matrix}
\begin{aligned}
F(D^2v)=&\, \sup_{\overline{\Omega}\times [-C_{0},0]}\psi(x,s)  \\
 v= &\, 0
\end{aligned} 
&
\begin{aligned}
\quad &\mbox{in } \Omega,  \\
\quad &\mbox{on } \partial\Omega.
\end{aligned} 
\end{matrix}
\end{array}\right.
\end{equation*}
At present, this construction is  standard, for instance, see \cite{CNS85}. For an arbitrary point on $\partial\Omega$, after applying a translation, we may assume it is actually the origin. Close to the origin, every point $x\in \partial\Omega$ can be represented  by 
\begin{equation}\label{coordinate transfor}
x_n=\rho(x')=\frac{1}{2}\sum_{\alpha=1}^{n-1}\kappa_{\alpha}x_{\alpha}^{2}+O(|x'|^3)
\end{equation}
where $x'=(x_1,\cdots,x_{n-1})$ and $\kappa_{\alpha}$ are the principle curvature of the boundary of $\Omega$ at $0$. Around the $\partial\Omega$   we denote by
\begin{equation*}
d(x)=\min_{y\in \partial\Omega}|x-y|\,, \qquad x\in \mathbb{R}^n
\end{equation*}
be the distance function from the boundary. It is well-known that at the origin, the Hessian matrix of $d$ is given by 
\begin{equation*}
\big(d_{ij}\big)=
\begin{pmatrix}
-\kappa_1 &  & 
&  \\
 &  & \ddots
&  \\
& & 
& -\kappa_{n-1}\\
& & 
& &0
\end{pmatrix}\,.
\end{equation*}
For a large constant $R$, we define a smooth function
\begin{equation*}
v=\frac{1}{R}(e^{-Rd}-1)\,.
\end{equation*}
An easy calculation yields that at the origin
\begin{equation*}
\big(v_{ij}\big):=
\begin{pmatrix}
\kappa_1 &  & 
&  \\
 &  & \ddots
&  \\
& & 
& \kappa_{n-1}\\
& & 
& &R
\end{pmatrix}.
\end{equation*}
Recalling that $\Omega$ is strictly $\Gamma$-convex, in view of Definition \ref{p convex} and continuity, we infer that $\lambda(D^2v)\in \Gamma$ in some exterior tubular neighborhood
\[\Omega_{\delta}:=\{x\in \overline{\Omega}: 0< d(x)< \delta\}\]
for a large number $R$ and a small number $\delta>0$. Then, the construction of   subsolution  $\underline{u}$, which is strictly $\Gamma$-admissible, can be reproduced almost verbatim from \cite[Section 2]{CNS85}, we omit the standard step here. Thus, one may, once again, take  $R$ to be so large that
\begin{equation}\label{subsolution}
\left\{\begin{array}{ll}
\begin{matrix}
\begin{aligned}
F(D^2\underline{u}) \geq &\, \sup_{\overline{\Omega}\times [-C_{0},0]}\psi\geq F(D^2u)  \\
\underline{u}= &\, u=0
\end{aligned} 
&
\begin{aligned}
\quad &\mbox{in } \Omega,  \\
\quad &\mbox{on } \partial\Omega.
\end{aligned} 
\end{matrix}
\end{array}\right.
\end{equation}
\footnote{We warn the reader that the definition of $\underline{u}$ is inherently dependent on $C_{0}$, thus we cannot obtain the $C^{0}$ estimate directly.}By applying the standard comparison principle again, we infer that $\underline{u}\leq u\leq 0$ in $\Omega$. Since $\underline{u}=u=0$ on $\partial\Omega$, the boundary gradient estimate then follows immediately.  
\medskip

Now, it is time to produce the global gradient estimate. Let us define
\begin{equation*}
Q=\log\beta +\varphi(u)+\frac{a}{2}|x|^2, \qquad x\in \overline{\Omega}
\end{equation*}
where $a>0$ is a small constant to be picked soon and 
\begin{align*}
\begin{split}
\beta&=1+\frac{1}{2}|\nabla u|^2\,;\\
\varphi(t)&=-\frac{1}{3}\log (1+C_{0}-t)\,, \quad t\in [-C_0,0].
\end{split}
\end{align*}
It will be useful to observe that
\begin{equation*}
\varphi'' =3(\varphi' )^{2}\,,\qquad \varphi' =\frac{1}{3(1+C_{0}-t)}\geq \frac{1}{3(1+2C_0)} \,.
\end{equation*}

There is no loss of generality in assuming that $Q$ attains its maximum at $x_{0}\in\overline{\Omega}$. 
If $x_0$ lies on the boundary $ \partial\Omega$, then there is no need for further proof as the desired inequality is already satisfied. We now suppose that $x_0$ lies in the interior of $ \Omega$, by performing the rotation of the coordinates $x_1,\cdots,x_n$ if necessary so that the matrix $(u_{ij})$ is diagonal at $x_0$. 

To simplify the notation, let us denote
\begin{equation*}
\mathcal{F}:=\sum_{i=1}^{n}F^{ii}\,.
\end{equation*} Thus at $x_0$, we have
\begin{equation}\label{vanishes1}
0=Q_{i}=\frac{1}{\beta}u_{ii}u_{i} +\varphi'u_{i}+ax_{i}
\end{equation}
for each $i=1,\cdots,n$ as well as
\begin{equation*}
\begin{split}
0\geq \sum_{i=1}^{n}F^{ii}Q_{ii}=\frac{1}{\beta}\sum_{i,k=1}^{n}F^{ii}u_{kii}u_{k}&+\frac{1}{\beta}\sum_{i=1}^{n}F^{ii}u^{2}_{ii}-\frac{1}{\beta^{2}}\sum_{i=1}^{n}F^{ii}u_{i}^{2}u^{2}_{ii}\\
&+\varphi'\sum_{i=1}^{n}F^{ii}u_{ii}+\varphi''\sum_{i=1}^{n}F^{ii}u^{2}_{i}+a\mathcal{F}\,.
\end{split}
\end{equation*}
Owing to the monotone increasing property of $\varphi$, then by utilizing \eqref{sum}--\eqref{first derivatives} along with the inequality \eqref{vanishes1}, we can infer that
\begin{equation}\label{max principle C1}
\begin{split}
0&\geq\frac{\langle D_{x}\psi,Du\rangle+|Du|^2 D_{u}\psi}{\beta}+\frac{1}{\beta}\sum_{i=1}^{n}F^{ii}u^{2}_{ii}-\frac{1}{\beta^{2}}\sum_{i=1}^{n}F^{ii}u_{i}^{2}u^{2}_{ii}+\varphi''\sum_{i=1}^{n}F^{ii}u^{2}_{i}+\varphi'\psi(x,u)+a\mathcal{F}\\
& \geq -\frac{|D_{x}\psi|}{\sqrt{\beta}}-|D_{u}\psi|-\sum_{i=1}^{n}F^{ii}(\varphi'u_{i}+ax_{i})^{2}+\varphi''\sum_{i=1}^{n}F^{ii}u^{2}_{i}+a\mathcal{F}\\
& \geq -\frac{|D_{x}\psi|}{\sqrt{\beta}}-|D_{u}\psi|+\big(\varphi''-2(\varphi')^{2}\big)\sum_{i=1}^{n}F^{ii}u_{i}^{2}-2a^{2}\sum_{i=1}^{n}F^{ii}x_{i}^{2}+a\mathcal{F}\\
&\geq -\frac{|D_{x}\psi|}{\sqrt{\beta}}-|D_{u}\psi|+\frac{1}{9(1+2C_{0})^{2}}\sum_{i=1}^{n}F^{ii}u_{i}^{2}+\big(a-2a^{2}(diam\, \Omega)^{2} \big)\mathcal{F}\,,
\end{split}
\end{equation}
where we have used the inequality $(x+y)^2\leq 2x^2+2y^2$ in the third line. Select  a constant $a>0$ small enough such that $2a(diam\, \Omega)^{2}\leq \frac{1}{2}$. Thus, we conclude from \eqref{max principle C1} that at $x_0$
\begin{equation}\label{insert}
\frac{1}{9(1+2C_{0})^{2}}\sum_{i=1}^{n}F^{ii}u_{i}^{2}+\frac{a}{2}\sum_{i=1}^{n}F^{ii}\leq \frac{|D_{x}\psi|}{\sqrt{\beta}}+|D_{u}\psi|\leq C'
\end{equation}
for some uniform constant $C'$ provided that $\beta(x_0)\gg 1$.
Since each $F^{ii}$ is bounded above by $\frac{2C'}{a}$, and Condition {\bf (T)}  implies the lower bound
\[
F^{ii}\geq \mathscr{T}\big(\frac{a}{2C'}\big)^{n-1} \qquad  \text{for all } i=1,\cdots, n.
\]
Substituting the lower bound into \eqref{insert} yields
\[
\frac{\mathscr{T}}{9(1+2C_{0})^{2}}\big(\frac{a}{2C'}\big)^{n-1}|D u|^2\leq C'.
\]
Consequently, we deduce the required estimate for $|Du|(x_{0})$, thus concluding the proof.
\end{proof}

\section{Second order estimates}\label{Second order estimates}
The objective of this section is to establish the second order estimates. To do so, we first reduce the global second order estimate to a corresponding boundary estimate. It then suffices to prove the second order estimates at each boundary point.

\subsection{Global second order estimate}
In this subsection, we will prove the following global second order estimate:
\begin{proposition}\label{int hessian estimate}
Suppose $\Omega$ is a bounded and strictly $\Gamma$-convex domain in $\mathbb{R}^n$ with smooth boundary. Let $u\in C^{4}(\Omega)\cap C^{1,1}(\overline{\Omega})$ be a $\Gamma$-admissible solution to the Dirichlet problem \eqref{Dirichlet problem}. Then there exists a constant $C_2$ depends on $\Omega,$ $C_{0}$, $\sup_{\overline{\Omega}\times [-C_{0},0]}\psi$, $\|D_{x}\psi\|_{C^{1}}$  and  $\|D_{u}\psi\|_{C^1}$ such that
\begin{equation}\label{global hessian to boundary}
\sup_{\overline{\Omega}}|D^2u|\leq C_{2}\Big(1+\sup_{\partial\Omega}|D^2u|\Big)\,.
\end{equation}
\end{proposition}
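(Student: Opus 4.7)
The proof will follow the classical maximum-principle strategy for reducing a global $C^{2}$ estimate to a boundary $C^{2}$ estimate. A useful first observation is that since $u$ is $\Gamma$-admissible and $\Gamma\subset\Gamma_{1}$, the eigenvalues $\lambda_{i}=\lambda_{i}(D^{2}u)$ satisfy $\sum_{i}\lambda_{i}\geq 0$; combined with $\lambda_{i}\leq\lambda_{1}$ for every $i$, where $\lambda_{1}(x)$ denotes the largest eigenvalue of $D^{2}u(x)$, this yields $|\lambda_{i}|\leq(n-1)\lambda_{1}$ and hence $|D^{2}u|\leq C\lambda_{1}$. It therefore suffices to produce a pointwise upper bound on $\lambda_{1}$ of the form $C_{2}(1+\sup_{\partial\Omega}|D^{2}u|)$.

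I will apply the maximum principle to the auxiliary function
$$W(x,\xi)=\log u_{\xi\xi}(x)+\varphi(|Du|^{2})+\frac{a}{2}|x|^{2}$$
on $\overline{\Omega}\times S^{n-1}$, where $\varphi(t)=-\tfrac{1}{2}\log(2C_{1}^{2}+1-t)$ is chosen so that $\varphi''=2(\varphi')^{2}$ on $[0,C_{1}^{2}]$ (with $C_{1}$ the gradient bound from Proposition \ref{gradient estimate}), and $a>0$ is a small parameter to be fixed. If the maximum of $W$ is achieved on $\partial\Omega$, then the boundedness of $\varphi$ and $|x|$ yield \eqref{global hessian to boundary} at once. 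Otherwise suppose $W$ attains its maximum at an interior point $x_{0}\in\Omega$ in a direction $\xi_{0}\in S^{n-1}$. After a rotation of coordinates we may assume $\xi_{0}=e_{1}$ and $D^{2}u(x_{0})$ is diagonal, so that $u_{\xi_{0}\xi_{0}}=u_{11}=\lambda_{1}(x_{0})$.

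At $x_{0}$ the first-order conditions $W_{i}=0$ give the identity $u_{11i}/u_{11}=-2\varphi'u_{k}u_{ki}-ax_{i}$, and the second-order necessary condition $\sum F^{ij}W_{ij}\leq 0$ is expanded using \eqref{sum}--\eqref{second derivatives} and the diagonalization \eqref{structures of F}. The concavity of $F$ ensures $\sum F^{ij,kl}u_{ij1}u_{kl1}\leq 0$ (both the $(f_{ij})$-part and the off-diagonal divided-difference part have the correct sign because $f$ is concave and symmetric), and hence $F^{ii}u_{11ii}\geq D_{11}\psi$. Substituting the first-order condition and invoking the gradient bound to control the term $2\varphi'u_{k}D_{k}\psi$ produces, in schematic form,
$$\bigl(4\varphi''-8(\varphi')^{2}\bigr)\sum_{i=1}^{n}F^{ii}u_{i}^{2}\lambda_{i}^{2}+2\varphi'\sum_{i=1}^{n}F^{ii}\lambda_{i}^{2}+\Bigl(a-2a^{2}(\operatorname{diam}\Omega)^{2}\Bigr)\mathcal{F}\leq C+\frac{|D_{11}\psi|}{u_{11}},$$
where $C$ depends only on $C_{0}$, $C_{1}$, and $\|\psi\|_{C^{1,1}}$. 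The specific choice of $\varphi$ kills the first sum exactly, and taking $a>0$ small enough yields simultaneously $\mathcal{F}=\sum_{i}F^{ii}\leq C''$ and $\sum_{i}F^{ii}\lambda_{i}^{2}\leq C''$, provided $\lambda_{1}(x_{0})$ is sufficiently large.

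The closing step mirrors the end of the proof of Proposition \ref{gradient estimate}: the upper bound $F^{ii}\leq\mathcal{F}\leq C''$ for each $i$, combined with Condition \textbf{(T)} in the form $\prod_{i}F^{ii}\geq\mathscr{T}$, forces the lower bound $F^{ii}\geq\mathscr{T}/(C'')^{n-1}$. In particular $F^{11}\lambda_{1}^{2}\leq\sum_{i}F^{ii}\lambda_{i}^{2}\leq C''$, giving the desired bound $\lambda_{1}(x_{0})\leq C_{2}$; combined with the boundary case this establishes \eqref{global hessian to boundary}. The main obstacle I anticipate is the careful bookkeeping required to verify that both parts of the concavity contribution $\sum F^{ij,kl}u_{ij1}u_{kl1}$ have the correct sign, and that the particular choice of $\varphi$ indeed annihilates the problematic $\sum F^{ii}u_{i}^{2}\lambda_{i}^{2}$ term while the quadratic penalty $\tfrac{a}{2}|x|^{2}$ absorbs the remaining negative contribution from the $-F^{ii}u_{11i}^{2}/u_{11}^{2}$ piece.
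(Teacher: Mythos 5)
Your proof is correct, and it closes the estimate by a genuinely different and noticeably shorter route than the paper's. Both arguments apply the maximum principle to $\log\lambda_{1}+\varphi(\cdot\,|Du|^{2})+(\text{quadratic in }x)$ with $\varphi''=2(\varphi')^{2}$, so that after substituting the critical-point identity the quantity $\sum_{i}F^{ii}u_{i}^{2}\lambda_{i}^{2}$ cancels exactly; the difference lies in how the remaining negative contributions are absorbed. The paper uses $\tfrac12|x|^{2}$ with coefficient $1$, so the Cauchy--Schwarz step produces $-2(\mathrm{diam}\,\Omega)^{2}\sum_{i}F^{ii}$, which can overwhelm the $+\sum_{i}F^{ii}$ term; this forces the two-case analysis (comparing $u_{nn}$ with $-\tfrac13u_{11}$, introducing the index set $I$, and retaining the off-diagonal concavity terms $F^{1k,k1}$ together with the extra positive terms supplied by the perturbation $\hat{U}$ of $\lambda_{1}$). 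You instead take $\tfrac{a}{2}|x|^{2}$ with $a$ small --- exactly the device the paper itself uses in the gradient estimate --- so the coefficient of $\sum_{i}F^{ii}$ stays positive; you may then discard the entire concavity term $\sum F^{ij,kl}u_{1ij}u_{1kl}\le 0$, and the single resulting inequality $2\varphi'\sum_{i}F^{ii}\lambda_{i}^{2}+\tfrac{a}{2}\sum_{i}F^{ii}\le C$ simultaneously bounds every $F^{ii}$ from above and bounds $F^{11}\lambda_{1}^{2}$; Condition \textbf{(T)} then bounds $F^{11}$ from below and finishes. This shortcut is legitimate precisely because Condition \textbf{(T)} is in force: it is what converts the upper bound on $\sum_i F^{ii}$ into the lower bound $F^{11}\ge c>0$, whereas without it one is driven back to the paper's case analysis, which in Case 1 extracts the bound from the largest coefficient $F^{nn}$ instead. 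Your direction-maximization formulation $W(x,\xi)$ is also a clean substitute for the paper's perturbation of $\lambda_1$, whose extra good terms are only needed in the paper's Case 2, which you avoid. Two cosmetic points to record in a final write-up: restrict $W$ to the set where $u_{\xi\xi}>0$ (or note that if $\lambda_{1}\le 1$ everywhere there is nothing to prove), and state explicitly that $|D_{11}\psi|/u_{11}\le C$ uses $u_{11}\ge 1$ together with the boundedness of $\psi_{u}$.
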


\begin{proof}
Let $U$ denote the matrix $D^2u$ for simplicity. We consider the auxiliary function
\begin{equation*}
Q=\log \lambda_{1}(U)+\varphi\big(\frac{1}{2}|Du|^{2}\big)+\frac{1}{2}|x|^{2} 
\end{equation*}
on an open subset $\Omega_{+}:=\{x\in \Omega: \lambda_{1}(U)(x)>0\}$, where $\lambda_{1}(U)(x)$ is the largest eigenvalue of $U$ at $x$, and the function $\varphi(t)$ is given by
\begin{equation*}
\varphi(t)=-\frac{1}{2}\log(2K-t), \qquad K=1+\sup_{\overline{\Omega}}|Du|^2.
\end{equation*} 
We may assume that $\Omega_{+}$ is nonempty; otherwise, we have already done. A direct calculation yields the following:
\begin{equation}\label{phi}
\varphi''=2(\varphi')^2, \qquad \frac{1}{4K}\leq \varphi'\leq \frac{1}{2K}.
\end{equation}
Assume that $Q$ achieves its maximum at a point $x_{0}\in \Omega$. We remark that $Q$ may not be twice differentiable at $x_0$ if the eigenspace of $\lambda_{1}(U)$ has a dimension greater than $1$. To overcome this difficulty, we will apply a standard perturbation argument. \medskip

More precisely, let us define a new matrix $\hat{U}=(\hat{U}_{ij})$ by
\begin{equation}\label{U}
\hat{U}_{ij}:=u_{ij}-(\delta_{ij}-\delta_{1i}\delta_{1j})
\end{equation}
such that
\[
\lambda_{1}(\hat{U})\leq \lambda_{1}(U)\qquad \text{ in } \,\Omega,
\]
with equality holds only at $x_0$. We may further order the eigenvalues of $\hat{U}$ by $\lambda_{1}(\hat{U})\geq \cdots \geq \lambda_{n}(\hat{U})$ and then $\lambda_{1}(\hat{U})>\lambda_{2}(\hat{U})$ at the considered point. Hence, the new auxiliary function
\begin{equation*}
\hat{Q}:=\log \lambda_{1}(\hat{U})+\varphi\big(\frac{1}{2}|Du|^{2}\big)+\frac{1}{2}|x|^{2}
\end{equation*}
is twice differentiable pointwise within $\Omega_{+}$. Moreover, the point $x_0$ where the maximum of $Q$ is attained turns out to be a  maximum point for the function $\hat{Q}$. Below it remains to prove that
\begin{equation}\label{global hessian to boundary v2}
\lambda_{1}(\hat{U})(x_0)\leq C_{2}
\end{equation}
for a certain positive constant $C_{2}$ as in \eqref{global hessian to boundary}.

Rotating the coordinates $x_1,\cdots,x_n$, we may assume that $\hat U$ is diagonal at $x_0$. From the maximum principle, it follows that at $x_0$:
\begin{equation*}
0=\hat{Q}_{i}= \frac{\lambda_{1,i}}{\lambda_{1}}+\varphi'u_{ii}u_{i}+x_{i}
\end{equation*}
as well as
\begin{equation*}
\begin{split}
0\geq &\quad \frac{\lambda_{1,ii}}{\lambda_{1}}-\Big(\frac{\lambda_{1,i}}{\lambda_1}\Big)^{2} +\varphi''u^{2}_{ii}u^{2}_{i}+\varphi'\Big(u^2_{ii}+\sum_{j=1}^{n}u_{jii}u_{j}\Big)+1.
\end{split}
\end{equation*}
The following formulas are well-known in \cite{Spr}:
\begin{align*}
\label{}
& \frac{\partial \lambda_{1}}{\partial x_{i}}(x_0)=\frac{\partial u_{11}}{\partial x_{i}}(x_0)\,; \\
\label{}
& \frac{\partial^2 \lambda_{1}}{\partial x_{i}\partial x_{j}}(x_0)=\frac{\partial u_{11}}{\partial x_{i}\partial x_{j}}(x_0)+2\sum_{k=2}^{n}\frac{u_{1ki}u_{1kj}}{\lambda_{1}-\lambda_k+1}\,.
\end{align*}
We therefore obtain
\begin{equation}\label{fisrt order vanish}
\frac{u_{11i}}{u_{11}}=-\varphi'u_{ii}u_{i}-x_{i}
\end{equation}
for each $i=1,\cdots,n$ and 
\begin{equation}\label{max principle}
\begin{split}
0\geq \sum_{i=1}^{n}F^{ii} \Big[\frac{u_{11ii}}{u_{11}} &+2\sum_{k=2}^{n}\frac{u^2_{1ki}}{u_{11}(u_{11}-u_{kk}+1)}-\frac{u^{2}_{11i}}{u^{2}_{11}} \Big] \\
&+\varphi''\sum_{i=1}^{n}F^{ii}u^{2}_{ii}u^{2}_{i}+\varphi'\sum_{i=1}^{n}F^{ii}\Big[u^2_{ii}+\sum_{j=1}^{n}u_{jii}u_{j}\Big]+\mathcal{F}\,.
\end{split}
\end{equation}
Applying the formula \eqref{second derivatives} for $t=1$, we get
\begin{equation}\label{second derivatives with t1}
\sum_{i=1}^{n}F^{ii}u_{11ii}+\sum_{i,j,k,l}F^{ij,kl}u_{1ij}u_{1kl}=\psi_{x_{1}x_{1}}+\psi_{ux_{1}}u_{1}+\psi_{uu}u_{1}^{2}+\psi_{u}u_{11}\geq -Cu_{11}
\end{equation}
provided that $u_{11}\geq 1$ is large enough. While the second term on the left-hand reads
\[
\begin{split}
\sum_{i,j,k,l=1}^{n}F^{ij,kl}u_{1ij}u_{1kl}&= 2\sum_{1\leq l<k\leq n}F^{lk,kl}u^{2}_{1kl}+\sum_{k,l=1}^{n}F^{kk,ll}u_{1kk}u_{1ll}\leq 2\sum_{k=2}^{n}F^{1k,k1}u^{2}_{11k}\,.
\end{split}
\]
It then follows from \eqref{second derivatives with t1} that
\begin{equation}\label{fourth term} 
\sum_{i=1}^{n}F^{ii} \frac{u_{11ii}}{u_{11}} \geq -2\sum_{k=2}^{n}F^{1k,k1}\frac{u^{2}_{11k}}{u_{11}}-C\,.
\end{equation}
Observe that Proposition \ref{gradient estimate} implies that
\begin{equation}\label{third and first terms}
\sum_{i,j=1}^{n}F^{ii}u_{jii}u_{j}  =\langle D_{x}\psi,Du\rangle+D_{u}\psi|Du|^2\geq -C\,.
\end{equation}
Plugging  the inequalities \eqref{fourth term} and \eqref{third and first terms} into \eqref{max principle}, we therefore obtain
\begin{equation}\label{max principle 2}
\begin{split}
C\geq \sum_{k=2}^{n}\Big[-2F^{1k,k1}\frac{u^{2}_{11k}}{u_{11}}&+2F^{11}\frac{u^2_{11k}}{u_{11}(u_{11}-u_{kk}+1)}\Big] -\sum_{i=1}^{n}F^{ii}\frac{u^{2}_{11i}}{u^{2}_{11}} \\
&+\varphi''\sum_{i=1}^{n}F^{ii}u^{2}_{ii}u^{2}_{i}+\varphi'\sum_{i=1}^{n}F^{ii}u^2_{ii}+\mathcal{F}\,.
\end{split}
\end{equation}
In what follows, we divide the proof of inequality \eqref{global hessian to boundary v2} into two separate cases.\medskip

\noindent\textbf{Case 1: Assume $u_{nn}< -\frac{1}{3}\, u_{11}$. } For the second term in \eqref{max principle 2},  it follows from \eqref{phi}, \eqref{fisrt order vanish} and  the Cauchy-Schwarz inequality that
\begin{equation*}
\begin{split}
\sum_{i=1}^{n}F^{ii}\frac{u^{2}_{11i}}{u^{2}_{11}}&\leq 2(\varphi')^2\sum_{i=1}^{n}F^{ii}u^{2}_{ii}u^{2}_{i}+2\sum_{i=1}^{n}F^{ii}x_{i}^{2}\,.
\end{split}
\end{equation*}
Inserting it into \eqref{max principle 2} and coupling with \eqref{phi}, we have
\begin{equation*}
\begin{split}
C\geq &\, \sum_{k=2}^{n}\Big[-2F^{1k,k1}\frac{u^{2}_{11k}}{u_{11}}+2F^{11}\frac{u^2_{11k}}{u_{11}(u_{11}-u_{kk}+1)}\Big] -2\sum_{i=1}^{n}F^{ii}x_{i}^{2} \\
&\qquad +\big(\varphi''-2(\varphi')^2\big)\sum_{i=1}^{n}F^{ii}u^{2}_{ii}u^{2}_{i}+\varphi'\sum_{i=1}^{n}F^{ii}u^2_{ii}+\mathcal{F}\\
\geq &\, \varphi'\sum_{i=1}^{n}F^{ii}u^2_{ii}-2(diam\, \Omega)^{2}\mathcal{F}\,.
\end{split}
\end{equation*}
It then follows from the assumption $u_{nn}< -\frac{1}{3}\, u_{11}$ that
\begin{equation}\label{case 1-1}
C\geq \varphi'\sum_{i=1}^{n}F^{ii}u^2_{ii}-2(diam\, \Omega)^{2}\mathcal{F}\geq \varphi'F^{nn}u^2_{nn}-2(diam\, \Omega)^{2}\mathcal{F}  \geq  \frac{1}{9}\varphi'u_{11}^{2}F^{nn}-2(diam\, \Omega)^{2}\mathcal{F}.
\end{equation}
Given that $u_{11}\geq \cdots \geq u_{nn}$,  we can deduce from a result in Ecker-Huisken \cite{EH} or Spruck \cite{Spr} that $F^{11}\leq \cdots \leq F^{nn}$ and hence $F^{nn}\geq \frac{1}{n}\mathcal{F}$.  Observing that the  Condition {\bf (T)} and the Cauchy-Schwarz inequality guarantee that $\mathcal{F}\geq n\mathscr{T}^{\frac{1}{n}}$, it follows from \eqref{case 1-1} that
\[
C\geq \Big[\frac{1}{9n}\varphi'u_{11}^{2}-2(diam\, \Omega)^{2}\Big]\mathcal{F}\geq  \Big[\frac{u_{11}^{2}}{36nK}-2(diam\, \Omega)^{2}\Big]n\mathscr{T}^{\frac{1}{n}}\,.
\]
This will enable us to derive the desired upper bound of $u_{11}(x_0).$ \medskip

\noindent\textbf{Case 2: Assume $u_{nn}\geq -\frac{1}{3}\, u_{11}$.} Let us introduce an index set
\[
I:=\big\{ i\in \{2,\cdots,n\}\, |\, F^{ii}\geq 3F^{11} \big\}\,.
\]
Then for each $i\notin I$, we have $F^{ii}<3F^{11}$. It follows that
\begin{equation*}
\begin{split}
\sum_{i\notin I}F^{ii}\frac{u^{2}_{11i}}{u^{2}_{11}}\leq & \,2(\varphi')^2\sum_{i\notin I}F^{ii}u^{2}_{ii}u^{2}_{i}+2\sum_{i\notin I}F^{ii}x_{i}^{2} 
\leq  \varphi''\sum_{i\notin I}F^{ii}u^{2}_{ii}u^{2}_{i}+6n(diam\, \Omega)^{2}F^{11}\,.
\end{split}
\end{equation*}
Inserting it into \eqref{max principle 2}, we arrive at
\begin{equation}\label{max principle 3}
\begin{split}
C\geq &\sum_{k=2}^{n}\Big[-2F^{1k,k1}\frac{u^{2}_{11k}}{u_{11}}+2F^{11}\frac{u^2_{11k}}{u_{11}(u_{11}-u_{kk}+1)}\Big] -\sum_{i\in I}F^{ii}\frac{u^{2}_{11i}}{u^{2}_{11}} \\
&+\varphi''\sum_{i\in I}F^{ii}u^{2}_{ii}u^{2}_{i}+\varphi'F^{ii}u^2_{ii}+\mathcal{F}-6n(diam\, \Omega)^{2}F^{11}\\
\geq &\sum_{i \in I}\Big(-2F^{1i,i1}-\frac{F^{ii}}{u_{11}}\Big)\frac{u^{2}_{11i}}{u_{11}}+\varphi'F^{11}u^2_{11}+\mathcal{F}-6n(diam\, \Omega)^{2}F^{11}\,.
\end{split}
\end{equation}

For each $i\in I$, we have $F^{11}\leq \frac{1}{3}F^{ii}$ and $u_{ii}\geq u_{nn}\geq -\frac{1}{3}u_{11}$. It follows from \eqref{structures of F} that 
\begin{equation}\label{Fii and uii}
-2F^{1i,i1}=2\frac{F^{ii}-F^{11}}{u_{11}-u_{ii}}\geq \frac{F^{ii}}{u_{11}} \qquad  \text{for all }\, i\in I\,.
\end{equation}
Therefore, we can infer form \eqref{phi}, \eqref{max principle 3}, and \eqref{Fii and uii} that
\begin{equation}\label{max principle 4}
C\geq \frac{1}{4K}F^{11}u^2_{11}+\mathcal{F}-6n(diam\, \Omega)^{2}F^{11}.
\end{equation}
Without loss of generality, we may assume that $\frac{1}{8K}u_{11}^{2}(x_0)\geq 6n(diam\, \Omega)^{2}$.  Indeed, if this condition fails to hold, our proof is already concluded.  Consequently,  \eqref{max principle 4}  simplifies to 
\begin{equation*}
C\geq \frac{1}{8K}F^{11}u^2_{11}+\mathcal{F}.   
\end{equation*}
Since each $F^{ii}$ is bounded from above, according to Condition {\bf (T)}, $F^{11}$ is bounded from below and we obtain a uniform upper bound for $u_{11}(x_0)$ and thereby complete the proof of proposition.
\end{proof}
\subsection{Boundary second order estimate}
\begin{proposition}\label{boundary C2}
Suppose $\Omega$ is a bounded and strictly $\Gamma$-convex domain in $\mathbb{R}^n$ with smooth boundary. Let $u\in C^{4}(\Omega)\cap C^{1,1}(\overline{\Omega})$ be a $\Gamma$-admissible solution to the Dirichlet problem \eqref{Dirichlet problem}. Then we have
\begin{equation*}
\sup_{\partial\Omega}|D^2u|\leq C\,,
\end{equation*}
where $C$ is a constant depends on $p,$ $\Omega,$  $C_{0}$, $\sup_{\overline{\Omega}\times [-C_{0},0]}\psi$, $\|D_{x}\psi\|_{C^{1}}$  and  $\|D_{u}\psi\|_{C^1}$.
\end{proposition}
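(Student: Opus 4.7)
The plan is to adapt the classical Caffarelli--Nirenberg--Spruck three-step boundary argument, with the strict $\Gamma$-admissible subsolution $\underline u$ from \eqref{subsolution} playing the role of the principal barrier. Fix an arbitrary $x_0\in\partial\Omega$; after a rigid motion, assume $x_0=0$, the inner unit normal at $0$ is $e_n$, and $\partial\Omega$ is locally the graph $x_n=\rho(x')$ with $\rho(0)=|D\rho(0)|=0$, as in \eqref{coordinate transfor}. Differentiating $u(x',\rho(x'))\equiv 0$ twice in tangential directions $x_\alpha,x_\beta$ ($\alpha,\beta<n$) and evaluating at the origin yields
\[
u_{\alpha\beta}(0) = -u_n(0)\,\rho_{\alpha\beta}(0),
\]
so the tangential--tangential bound $|u_{\alpha\beta}(0)|\le C$ follows at once from Proposition~\ref{gradient estimate} and the $C^2$-regularity of $\rho$.

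For the mixed tangential--normal entries, fix $\alpha<n$ and introduce the field $T:=\partial_\alpha-\rho_\alpha(x')\partial_n$, which is tangent to $\partial\Omega$ near $0$, so that $Tu\equiv 0$ on $\partial\Omega$. Differentiating the equation along $T$ and invoking \eqref{first derivatives} together with Proposition~\ref{gradient estimate} yields $|F^{ij}(Tu)_{ij}|\le C(1+\mathcal F)$ in the tubular neighbourhood $\Omega_\delta:=\Omega\cap B_\delta(0)$. Concavity of $F$, together with Euler's identity $F^{ij}(D^2u)u_{ij}=\psi$ and the consequence $F^{ij}(D^2u)\underline u_{ij}\ge F(D^2\underline u)$, then gives
\[
F^{ij}(D^2u)(\underline u-u)_{ij}\ge F(D^2\underline u)-F(D^2u)\ge \tau>0 \qquad\text{in }\Omega_\delta,
\]
for a uniform $\tau>0$ coming from the strict $\Gamma$-admissibility of $\underline u$ near $\partial\Omega$ (this is where the strict $\Gamma$-convexity of $\partial\Omega$ enters essentially). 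A barrier of the form $\Phi:=A(u-\underline u)+B|x|^2$, with $A\gg B\gg 1$, then satisfies $F^{ij}\Phi_{ij}\le -K(1+\mathcal F)$ pointwise and $\Phi\ge|Tu|$ on $\partial\Omega_\delta$; the comparison principle applied to $\Phi\pm Tu$ forces $|Tu|\le\Phi$ in $\Omega_\delta$, and since $Tu(0)=\Phi(0)=0$, normal differentiation at $0$ yields $|u_{\alpha n}(0)|\le\partial_n\Phi(0)\le C$.

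The pure normal entry $c_0:=u_{nn}(0)$ is the main obstacle. Writing
\[
D^2u(0)=\begin{pmatrix} A_0 & b_0\\ b_0^{\top} & c_0 \end{pmatrix}
\]
with $A_0\in\mathrm{Sym}^2(\mathbb R^{n-1})$ and $b_0\in\mathbb R^{n-1}$ uniformly bounded by the previous two steps, I plan to argue by contradiction: if $c_0\to\infty$ along a sequence of boundary base points, standard block-matrix eigenvalue perturbation shows that $\lambda_n(D^2u(0))=c_0+O(1)\to\infty$ while $\lambda_1,\ldots,\lambda_{n-1}$ stay bounded. I then combine Euler's identity $\psi=\sum_i f_i(\lambda)\lambda_i$ from degree-one homogeneity; the degree-zero homogeneity $f_i(\lambda)=f_i(\lambda/\lambda_n)$, which forces $\lambda/\lambda_n\to(0,\ldots,0,1)\in\partial\Gamma_n\subset\overline\Gamma$; Urbas's inequality \eqref{urbas}; and Condition~\textbf{(T)}, $\prod_i f_i(\lambda)\ge\mathscr T>0$, to extract a contradiction to the boundedness of $\psi$. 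The delicate point is that when $(0,\ldots,0,1)$ lies on $\partial\Gamma$, individual $f_i$ can blow up and $f$ can vanish, so the strict positivity of $\prod_i f_i$ provided by Condition~\textbf{(T)} must be balanced carefully against this degeneracy; this step is expected to follow the spirit of Urbas \cite{Urbas91}, Guan \cite{Guan94}, and Wang \cite{W94}, adapted to the general Condition~\textbf{(T)} framework.
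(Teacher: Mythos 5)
Your tangential--tangential step matches the paper's. In the mixed tangential--normal step the skeleton is right, but the barrier inequality you assert does not follow from what you prove: concavity plus $F(D^2\underline u)-F(D^2u)\ge\tau$ only gives $F^{ij}(u-\underline u)_{ij}\le-\tau$, a fixed negative constant, whereas the differentiated equation contributes an error of size $C(1+\mathcal F)$ with $\mathcal F=\sum_iF^{ii}$ a priori unbounded above, and your $+B|x|^2$ term adds $+2B\mathcal F$ with the wrong sign. For the comparison to work the barrier must produce a term $-\epsilon\,\mathcal F$; the paper obtains it from the \emph{strict} admissibility of $\underline u$ (which yields $F^{ij}\underline u_{ij}\ge2\epsilon_0\mathcal F$) together with Guan's refinement $v=(u-\underline u)+td-\tfrac{N}{2}d^2$, whose $-NF^{nn}$ term, combined with Condition \textbf{(T)} and AM--GM via $\tfrac{\epsilon_0}{2}\mathcal F+NF^{nn}\ge n(\epsilon_0/2)^{1-1/n}\mathscr T^{1/n}N^{1/n}$, absorbs the constant $\sup\psi$. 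This is fixable, but it is not what you wrote.

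The genuine gap is the double normal estimate, which is the core of the proposition and for which you give only a plan. Your contradiction scheme (let $u_{nn}(0)\to\infty$, invoke Euler's identity, homogeneity, Urbas's inequality and Condition \textbf{(T)}) does not close in the regime where $\lim_{R\to\infty}f(\lambda',R)<\infty$ for $\lambda'$ in the reduced cone: there $f(\lambda)$ remains bounded even as $\lambda_n\to\infty$, so the boundedness of $\psi$ is never contradicted, and Condition \textbf{(T)} alone yields no bound on $u_{nn}$ (note also that $\psi=\sum_if_i\lambda_i$ gives no usable information termwise, since the $\lambda_i$ with $i<n$ may be negative and the $f_i$ unbounded). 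The paper handles exactly this case with Trudinger's device: it sets $\delta_R=\min_{\partial\Omega}[f(\lambda'(\tilde U),R)-\psi]$, introduces the limit operator $\tilde F(E)=\lim_{R\to\infty}f(\lambda'(E),R)$, uses its concavity and the subsolution to get $\tilde F[\tilde{\underline U}]-F[\underline U]\ge2\epsilon_0>0$, and then runs a second barrier-plus-Hopf argument on $\Phi=-\eta(u-\underline u)_{x_n}+\sum_{\alpha,\beta}\tilde F^{\alpha\beta}\big(\tilde{\underline U}_{\alpha\beta}-\tilde U_{\alpha\beta}(x_0)\big)$ to bound $u_{nn}(x_0)$. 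Neither this machinery nor a substitute for it appears in your proposal, so the hardest estimate is asserted rather than proved.
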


For each given boundary point $x\in \partial\Omega$, after a translation and a rotation, we may assume that $x=0$ is  the origin and the boundary $\partial\Omega$ can be represented locally in the same manner as \eqref{coordinate transfor}. Below it suffices to show
\begin{equation}\label{boundary hessian}
|D^2u(0)|\leq C\,.
\end{equation}
\subsubsection{Double tangential estimates}
Differentiating the boundary condition $u(x',\rho(x'))=0$ on $\partial\Omega$ twice along the tangential directions,  we have
\begin{equation}\label{boundary point}
u_{\alpha\beta}(0)=-u_{n}\rho_{\alpha\beta}=-u_{n}\kappa_{\alpha}\delta_{\alpha\beta}\, \qquad   \text{for all } \, 
 \alpha,\beta\leq n-1,
\end{equation}
where $(\kappa_1, \cdots,\kappa_{n-1})$ is the principle curvatures of $\partial\Omega$ at the origin. This implies, using the gradient estimates, that:
\begin{equation}\label{double tangential}
\big|u_{\alpha\beta}(0)\big|\leq C\, \qquad  \text{for all } \, \alpha,\beta\leq n-1\,.
\end{equation}
Notice that the boundedness property of $\kappa_{\alpha}$ is the only aspect we utilize here.

\subsubsection{Tangential to normal estimates}

To establish the estimate
\begin{equation}\label{mix}
|u_{\alpha n}(0)|\leq C\, \qquad   \text{for all } \,  \alpha\leq n-1,
\end{equation}
it is sufficient to proceed as in the proof of Caffarelli-Nirenberg-Spruck \cite[Section 5]{CNS85}, we include a sketch of proof here for completeness.

 Let us define an elliptic operator
\[L=\sum_{i,j=1}^{n}F^{ij}\partial_{i}\partial_{j}.\] 
The following lemma,  which is due to B. Guan \cite{Guan99},  will play a significant role in our proof.
\begin{lemma}
There exist some uniform positive constants $t$, $\delta$ and $\epsilon$ sufficiently small, and an $N$ sufficiently large, such that the function
\begin{equation*}
v=(u-\underline{u})+td-\frac{N}{2}d^{2}
\end{equation*}
satisfies $v\geq 0$ in $\overline{\Omega}_{\delta}$ and
\begin{equation}\label{Lv}
Lv\leq -\epsilon -\epsilon \sum_{i=1}^{n}F^{ii} \qquad \text{ in } \ \Omega_{\delta}\,.
\end{equation}
\end{lemma}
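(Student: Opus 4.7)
The plan is to carry out a standard Guan-type barrier construction. The nonnegativity of $v$ will follow from a comparison with the subsolution $\underline{u}$ together with a smallness condition $\delta\leq 2t/N$, whereas the differential inequality \eqref{Lv} will be extracted from a single concavity-based identity that produces both a constant and a multiple of $\mathcal{F}:=\sum_{i=1}^{n}F^{ii}(D^2u)$ on the right-hand side.

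To show $v\geq 0$ on $\overline{\Omega}_\delta$: since $\underline{u}$ and $u$ both vanish on $\partial\Omega$ and satisfy $F(D^2\underline{u})\geq \sup_{\overline{\Omega}\times[-C_{0},0]}\psi\geq F(D^2u)$, the standard comparison principle for concave elliptic Hessian operators yields $u\geq\underline{u}$ throughout $\Omega$. Factoring $td-\tfrac{N}{2}d^{2}=d\bigl(t-\tfrac{N}{2}d\bigr)$, this expression is nonnegative whenever $d\leq 2t/N$, so it suffices to impose $\delta\leq 2t/N$ in the final choice of parameters.

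The heart of the argument is the estimate for $L(u-\underline{u})$. By enlarging the parameter $R$ in the construction of $\underline{u}$ from Section \ref{First order estimate}, one may arrange that $\underline{u}$ is strictly $\Gamma$-admissible with the stronger property $D^2\underline{u}-\eta I\in\mathscr{M}(\Gamma)$ throughout $\overline{\Omega}$ for some small $\eta>0$, and moreover $F(D^2\underline{u}-\eta I)\geq \sup\psi+\sigma$ for some $\sigma>0$; the latter follows from the $1$-homogeneity and continuity of $F$ on compact subsets of $\mathscr{M}(\Gamma)$. Applying the concavity inequality of $F$ at the base point $D^2u$ to the matrix $D^2\underline{u}-\eta I$ and rearranging produces the crucial bound
\begin{equation*}
L(u-\underline{u})\,\leq\,F(D^2u)-F(D^2\underline{u}-\eta I)-\eta\mathcal{F}\,\leq\,-\sigma-\eta\mathcal{F}.
\end{equation*}

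For the remaining contributions, since $\partial\Omega\in C^{3,1}$ the distance function satisfies $d\in C^{3,1}(\overline{\Omega}_\delta)$ with $\|D^2d\|_{L^{\infty}(\Omega_\delta)}\leq C_d$ on a sufficiently thin tubular neighbourhood; diagonalizing $D^2u$ at a given point shows $|Ld|\leq C_d\mathcal{F}$, and since $\sum F^{ij}d_id_j\geq 0$ one has $-\tfrac{N}{2}L(d^2)=-Nd\,Ld-N\sum F^{ij}d_id_j\leq N\delta C_d\mathcal{F}$ on $\Omega_\delta$. Combining these bounds with $tLd\leq tC_d\mathcal{F}$,
\begin{equation*}
Lv\,\leq\,-\sigma+\bigl(tC_d+N\delta C_d-\eta\bigr)\mathcal{F}.
\end{equation*}
Selecting $t$ with $tC_d\leq\eta/4$ and then $\delta\leq\min\{2t/N,\eta/(4NC_d)\}$ forces the coefficient of $\mathcal{F}$ below $-\eta/2$, and \eqref{Lv} follows with $\epsilon:=\min\{\sigma,\eta/2\}$. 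The main obstacle is the concavity step: producing simultaneously a positive constant $\sigma$ and a positive coefficient in front of $\mathcal{F}$ from a single Taylor-type inequality requires both the strict admissibility of $\underline{u}$ and a preserved subsolution margin after the $-\eta I$ perturbation, both of which can be arranged by taking $R$ sufficiently large in the construction of $\underline{u}$.
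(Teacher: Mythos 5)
Your proof is correct, but it reaches the inequality \eqref{Lv} by a genuinely different mechanism than the paper. The paper keeps the term $-N\sum F^{ij}d_id_j=-NF^{nn}$ coming from $-\tfrac{N}{2}Ld^{2}$ and pairs it with $\tfrac{\epsilon_0}{2}\sum F^{ii}$; Condition \textbf{(T)} and the arithmetic--geometric mean inequality then give
\begin{equation*}
\frac{\epsilon_{0}}{2}\sum_{i=1}^{n}F^{ii}+NF^{nn}\geq n\Big(\frac{\epsilon_{0}}{2}\Big)^{1-\frac{1}{n}}\mathscr{T}^{\frac{1}{n}}N^{\frac{1}{n}},
\end{equation*}
which for $N$ large absorbs the positive constant $\sup\psi+\epsilon_0$ left over from $\sum F^{ij}(u_{ij}-\underline{u}_{ij})\leq \sup\psi-2\epsilon_{0}\sum F^{ii}$. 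This is exactly where the hypothesis ``$N$ sufficiently large'' is used, and it is the only place in the lemma where Condition \textbf{(T)} enters. You instead discard $-NF^{nn}$ (bounding it by $0$) and manufacture the negative constant $-\sigma$ directly from a strengthened subsolution margin, namely $D^{2}\underline{u}-\eta I\in\mathscr{M}(\Gamma)$ with $F(D^{2}\underline{u}-\eta I)\geq\sup\psi+\sigma$, fed through the concavity inequality at the base point $D^{2}u$. That extra margin is legitimately arrangeable (take $R$ larger in the CNS construction so that $F(D^2\underline{u})\geq\sup\psi+2\sigma$, then use uniform continuity of $F$ on the compact set $\{D^2\underline{u}(x):x\in\overline{\Omega}\}\subset\mathscr{M}(\Gamma)$), so the argument closes. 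What each route buys: yours is more elementary, makes no use of Condition \textbf{(T)} in this lemma, and does not actually need $N$ large (only the product $N\delta$ small), whereas the paper's version works with the weaker normalization $F(D^2\underline{u})\geq\sup\psi$ and instead leans on the structural hypothesis \eqref{det}. Your treatment of $v\geq0$ (pointwise: $u\geq\underline{u}$ by comparison and $td-\tfrac{N}{2}d^{2}=d(t-\tfrac{N}{2}d)\geq0$ for $d\leq 2t/N$) is also cleaner than the paper's boundary-only check. One cosmetic point: when you write $|Ld|\leq C_d\mathcal{F}$ you are implicitly using $\mathcal{F}\geq c>0$ to subsume the paper's $C_d(1+\mathcal{F})$; either form works here since only the coefficient of $\mathcal{F}$ needs to be driven below $-\eta/2$ and the constant $-\sigma$ survives independently.
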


\begin{proof}
First of all, it is easy to see $v=0$ on $\partial\Omega\,\cap\, \Omega_{\delta}$. Then, we can shrink $\delta<2t/N$ such that $v\geq 0$ on $\Omega\,\cap\, \partial\Omega_{\delta}$ after $t$ and $N$ being bounded. Therefore, we infer that $v\geq 0$ in $\overline{\Omega}_{\delta}$.

Because $\underline{u}$ is strictly $p$-plurisubharmonic, there exists a constant $\epsilon_{0}>0$  dependent  on $C_0$ such that 
\[(\underline{u}_{ij}-2\epsilon_0\delta_{ij})\in \mathcal{P}_{p}\,.\]
Notice that 
\[
Ld^2=2dLd+2\sum_{i,j=1}^{n}F^{ij}d_id_j=2dLd+2F^{nn}
\]
since $d_j=0$ for all $j\leq n-1$, and $d_n=1$.  It is also easy to verify that
\[
-C_{d}\Big(1+\sum_{i=1}^{n}F^{ii}\Big)\leq Ld\leq C_{d}\Big(1+\sum_{i=1}^{n}F^{ii}\Big)
\]
for a uniform positive constant $C_d$. 
It follows from \eqref{sum} that 
\begin{equation}\label{Lv II}
\begin{split}
Lv&=\sum_{i,j}F^{ij}(u_{ij}-\underline{u}_{ij})-(Nd-t)Ld-NF^{nn}\\
&\leq \sup_{\overline{\Omega}}\psi(x,u)+C_d(N\delta+t)+\big(C_d(N\delta+t)-2\epsilon_{0}\big)\sum_{i=1}^{n}F^{ii}-NF^{nn}\\
&\leq \sup_{\overline{\Omega}}\psi(x,u)+\epsilon_0-\Big(\frac{\epsilon_{0}}{2}\sum_{i=1}^{n}F^{ii}+NF^{nn}\Big)-\frac{\epsilon_{0}}{2}\sum_{i=1}^{n}F^{ii}
\end{split}
\end{equation}
provided that $C_{d}(N\delta+t)\leq 3C_dt<\epsilon_0$. By the Condition {\bf (T)} and the arithmetic-geometric means inequality, we deduce that
\begin{equation*}
\frac{\epsilon_{0}}{2}\sum_{i=1}^{n}F^{ii}+NF^{nn}
\geq   n\Big(\frac{\epsilon_{0}}{2}\Big)^{1-\frac{1}{n}}N^{\frac{1}{n}}\Big(\prod_{i=1}^{n} F^{ii}\Big)^{\frac{1}{n}} 
\geq n\Big(\frac{\epsilon_{0}}{2}\Big)^{1-\frac{1}{n}}\mathscr{T}^{\frac{1}{n}}N^{\frac{1}{n}}\,.
\end{equation*}
We can choose $N$ large enough to guarantee that 
$$n\Big(\frac{\epsilon_{0}}{2}\Big)^{1-\frac{1}{n}}\mathscr{T}^{\frac{1}{n}}N^{\frac{1}{n}}\geq \frac{3\epsilon_0}{2}+\sup_{\overline{\Omega}}\psi(x,u)\,.$$
By substituting this into \eqref{Lv II} and choosing $\epsilon=\frac{\epsilon_{0}}{2}$, we complete the proof of \eqref{Lv}.
\end{proof}

With this result we will prove the inequality \eqref{mix}. Differentiating the equation $F(D^2u)=\psi(x,u)$ once, we get that for each $k=1,2,\cdots,n$,
\begin{equation*}
-C\Big(1+\sum_{i=1}^{n}F^{ii}\Big)\leq L(u_k-\underline{u}_k) \leq C\Big(1+\sum_{i=1}^{n}F^{ii}\Big)\,.
\end{equation*}
Consider the auxiliary function given by
\begin{equation*}
Q=\pm (u_{\alpha}-\underline{u}_{\alpha})+Av+B\rho^{2}\, \qquad   \text{for all } \alpha< n\,,
\end{equation*}
where $A$ and $B$ are constants with  $A\gg B \gg 1$,  ensuring that $Q\geq 0$ on the boundary $\partial\Omega_{\delta}$ and $LQ\leq 0$ in $\Omega_{\delta}$. According to the maximum principle, it can be deduced that $Q\geq 0$  across the entire domain $\Omega_{\delta}$. Given that $Q(x_0)=0$,  the Hopf Lemma \cite[Lemma 6.4.2]{E10} implies
\[
Q_n(x_0)\leq 0\,.
\]
This provides us with the desired estimate \eqref{mix}. 

\subsubsection{Double normal estimates}

Below we will derive the following  double normal estimate
\begin{equation}\label{double normal}
|u_{nn}(0)|\leq C\,.
\end{equation}
Prior to proving this, let us revisit some valuable concepts from matrix theory: For each $A=(a_{ij})\in \mathrm{Sym}^{2}(\mathbb{R}^{n})$ with eigenvalues $\lambda_{i}(A)$ arranged in ascending order, i.e. $\lambda_{1}(A)\leq \cdots \leq \lambda_{n}(A)$, we denote the eigenvalues of $\tilde{A}=(a_{\alpha\beta})_{1\leq \alpha,\beta\leq
 n-1}$ by $\lambda'_{\alpha}(\tilde{A})$. It follows from Cauchy's interlace inequality \cite{H04} and \cite[Lemma 1.2]{CNS85} that when $|a_{nn}|\rightarrow \infty$
\begin{equation}\label{matrix theory}
\begin{matrix}
\begin{aligned}
&\lambda_{\alpha}(A)\leq \lambda'_{\alpha}(\tilde A)\leq \lambda_{\alpha+1}(A) \\[2mm]
&\lambda_{\alpha+1}(A) =\lambda'_{\alpha}(\tilde A)+o(1) \\[2mm]
&a_{nn} \leq \lambda_{n}(A)\leq  a_{nn}+O(1)
\end{aligned} 
\end{matrix} 
\end{equation}
for all $1\leq \alpha\leq n-1$, where the $o(1)$ and $O(1)$ are uniform terms. 

Next, we are proceed to prove \eqref{double normal}. Without loss of generality, we assume that $u_{nn}(0)\gg 1$ is sufficiently large; otherwise, there is nothing to prove. We claim that there are two uniform positive constants, $\hat{\delta}$ and $R_{0}$, such that for each $R\geq R_{0}$ and for each $x\in \partial\Omega$, we have
\begin{equation*}
\big(\lambda'(\tilde{U}),R\big)\in \Gamma  \quad  \textrm{and} \quad f\big(\lambda'(\tilde{U}),R\big)\geq \psi(x,u)+\hat{\delta}.
\end{equation*}
To achieve this, we shall follow an approach proposed by Trudinger \cite{Tru95} and set
\begin{equation*}
\delta_{R}:=\min_{x\in \partial\Omega}\big[f\big(\lambda'(\tilde{U}),R\big)-\psi(x,u)\big]\,, \qquad \delta_{\infty}:=\liminf_{R\rightarrow \infty}\delta_{R}\,.
\end{equation*}
Subsequently,  we are left with showing
\begin{equation}\label{delta}
\delta_{\infty}\geq \hat{\delta}
\end{equation}
for some uniform constant $\hat{\delta}>0.$

We can assume that $\delta_{\infty}$ is finite; otherwise, our proof is complete. Suppose that $\delta_{\infty}$ is attained at a point $x_{0}\in \partial\Omega$. If necessary, after performing a rotation, we select a local orthonormal frame $(e_{1},\cdots,e_{n})$ such that matrix $\tilde{U}$ is diagonal at the point $x_0$. Let us denote
\[
\Gamma_{\infty}:=\{\lambda'\in \mathbb{R}^{n-1}\,|\, (\lambda',R)\in \Gamma \, \textrm{ for some } \, R\}.
\]
We will now proceed to divide the proof of \eqref{delta} into two distinct cases. \medskip

\noindent \textbf{Case 1:} Assume that $\underset{\lambda_{n}\rightarrow \infty}{\lim}f(\lambda',\lambda_{n})=\infty$ for each $\lambda'\in \Gamma_{\infty}$. Based on \eqref{double tangential} and \eqref{mix}, it can be deduced that   $\lambda'(\tilde{U}(x_{0}))$ belongs to a compact set $ \mathcal{D}$ which is a subset of $ \Gamma_{\infty}$. 
Due to the compactness, there exist two constants, $\delta',R'>0$, which depend on $\lambda'(\tilde{U}(x_{0}))$, such that
\begin{equation*}
f\big(\lambda'(\tilde{U}(x_{0})),R\big)\geq \psi(x_{0},u)+2\delta' \qquad  \text{for all } \, R\geq R'.
\end{equation*}
By continuity, there exists an open cone $\mathcal{D}'$
with $\mathcal{D}\subset \mathcal{D}' \subset \Gamma_{\infty}$ such that
\begin{equation}\label{continuity}
f(\lambda',R)\geq \psi(x_{0},u)+\delta'  \qquad \text{for all } \, \lambda'\in \mathcal{D}'\, \textrm{ and }\, R\geq R'.
\end{equation}
Applying \eqref{matrix theory} to the matrix $D^2u(x_{0})$, there exists a sufficiently large constant $R''\geq R'$ such that if $u_{nn}(x_{0})\geq R''$, we have
\begin{equation}\label{max eigen}
\lambda_{n}(U(x_{0}))\geq u_{nn}(x_{0})\geq R''\geq R'.
\end{equation}
Shrinking $\mathcal{D}'$ if necessary such that
\begin{equation}\label{other eigen}
\big(\lambda_{1}(U(x_{0})),\cdots, \lambda_{n-1}(U(x_{0}))\big)\in \mathcal{D}'.
\end{equation}
From \eqref{continuity}, \eqref{max eigen}, and \eqref{other eigen}, we deduce that
\begin{equation*}
F(D^2u)(x_0)\geq \psi(x_{0},u)+\delta' ,
\end{equation*}
leading to a contradiction with \eqref{Dirichlet problem}. Consequently, we establish \eqref{delta} by setting $\hat{\delta}= \delta'$.
\medskip

\noindent \textbf{Case 2:} Assume that $\underset{\lambda_{n}\rightarrow \infty}{\lim}f(\lambda',\lambda_{n})<\infty$ for each $\lambda'\in \Gamma_{\infty}$. We define a new function
$$\tilde{F}(E):=\lim_{R\rightarrow\infty}f(\lambda'(E),R)$$
on the set $S:=\{E\in \mathrm{Sym}^{2}(\mathbb{R}^{n-1})\,|\, \lambda'(E)\in \Gamma_{\infty} \}$. Observe that $\tilde{F}$ is both concave and finite, given that the operator $F$ is concave and continuous. Consequently, there exists a symmetric $(n-1)\times (n-1)$ matrix $(\tilde{F}^{\alpha\beta})$ such that
\begin{equation}\label{concave}
\sum_{\alpha,\beta=1}^{n-1}\tilde{F}^{\alpha\beta}(\tilde{U})\big(E_{\alpha\beta}-\tilde{U}_{\alpha\beta}\big)\geq \tilde{F}(E)-\tilde{F}(\tilde{U}), \qquad \text{ for all}\, E\in S.
\end{equation}
Analogous to \eqref{boundary point}, one can derive 
\[
\sum_{\alpha,\beta=1}^{n-1}\big(\tilde{\underline{U}}_{\alpha\beta}-\tilde{U}_{\alpha\beta}\big)=(u-\underline{u})_{x_{n}}\sum_{\alpha,\beta=1}^{n-1}\kappa_{\alpha}\delta_{\alpha\beta}
\]
at $x_0$. Substituting this into \eqref{concave} results in
\begin{equation}\label{concave1}
\begin{split}
(u-\underline{u})_{x_{n}}\sum_{\alpha=1}^{n-1}\tilde{F}^{\alpha\alpha}\kappa_{\alpha} 
\geq & \, \tilde{F}[\tilde{\underline{U}}]-\tilde{F}[\tilde{U}]\\ = &\tilde{F}[\tilde{\underline{U}}]-\psi(x_0,u)-\delta_{\infty} 
=   \, \tilde{F}[\tilde{\underline{U}}]-\psi(x_0,\underline{u})-\delta_{\infty}\\
\geq & \, \tilde{F}[\tilde{\underline{U}}]-F[\underline{U}]-\delta_{\infty} \\
\geq & 2\epsilon_{0}-\delta_{\infty}\,.
\end{split}
\end{equation}
Here the second inequality follows from the condition $u=\underline{u}$ on $\partial\Omega$. Additionally,  the constant $\epsilon_{0}:=\frac{1}{2}\min_{\partial\Omega}(\tilde{F}[\tilde{\underline{U}}]-F[\underline{U}])$ is strictly positive due to the ellipticity of $F$.

Without loss of generality, we may assume
\begin{equation}\label{case 2}
(u-\underline{u})_{x_{n}}(x_{0})\sum_{\alpha=1}^{n-1}\tilde{F}^{\alpha\alpha}\kappa_{\alpha}(x_0) \geq\epsilon_{0}.
\end{equation}
 If this were not the case,  we can deduce from \eqref{concave1} that $\delta_{\infty} \geq\epsilon_{0}>0$, which would suffice to conclude the proof for this specific scenario. Henceforth, let us denote 
 \[\eta(x_0)=\sum_{\alpha=1}^{n-1}\tilde{F}^{\alpha\alpha}\kappa_{\alpha}(x_0).\]
Notice that $(u-\underline{u})_{x_{n}}(x_{0})\geq0$ and hence also strictly positive by virtue of \eqref{case 2}. It follows that
\begin{equation}\label{lower bound of eta}
\eta(x_{0})\geq\frac{\epsilon_{0}}{2(u-\underline{u})_{x_{n}}(x_{0})}\geq2\epsilon_1\epsilon_{0} 
\end{equation}
for some uniform constant $\epsilon_1>0$. We may assume $\eta\geq\epsilon_1\epsilon_{0}$ in a small neighborhood $\Omega_{\epsilon}:=\Omega\cap B_{\epsilon}(x_{0})$ when $\epsilon$ is sufficiently small.

Let us define a function $\Phi$ in $\Omega_{\epsilon}$ by
\begin{equation*}
\begin{split}
\Phi=&-\eta(u-\underline{u})_{x_{n}}+\sum_{\alpha,\beta=1}^{n-1}\tilde{F}^{\alpha\beta}\big(\tilde{\underline{U}}_{\alpha\beta}-\tilde{U}_{\alpha\beta}(x_{0})\big). \\
\end{split}
\end{equation*}
A straightforward computation gives
\begin{equation*}
-\eta\,(u-\underline{u})_{x_{n}}=\sum_{\alpha,\beta=1}^{n-1}\tilde{F}^{\alpha\beta}\big(\tilde{U}_{\alpha\beta}-\tilde{\underline{U}}_{\alpha\beta}\big)\,.
\end{equation*}
This, together with \eqref{concave}, yield that
\begin{equation*}
\begin{split}
\Phi&=\sum_{\alpha,\beta=1}^{n-1}\tilde{F}^{\alpha\beta}\big(\tilde{U}_{\alpha\beta}-\tilde{U}_{\alpha\beta}(x_{0})\big)\geq \tilde{F}[\tilde{U}]- \tilde{F}[\tilde{U}(x_{0})]= \tilde{F}(\tilde{U})-\psi(x_{0},u)- \delta_{\infty}\,.\\
\end{split}
\end{equation*}
It follows that $\Phi(x_{0})=0$ and $\Phi\geq0$ on $\partial\Omega$. Let us consider a barrier function
\begin{equation*}
\Psi:=\Phi+N_2 v+N_1|x|^{2} \qquad  \text{ in } \, \Omega_{\epsilon},
\end{equation*}
where $N_{1},N_{2}$ are certain large positive constants. One can verify that 
\[\underset{x\rightarrow\partial\Omega_{\epsilon}}{\lim\inf}\, \Psi(x)\geq 0,\qquad L\Psi\geq0 \, \text{ in } \, \Omega_{\epsilon}\]
as long as $N_2\gg N_1\geq C\epsilon^{2}$.  According to Hopf's lemma, we know that $\Phi_{x_n}(x_{0})\geq-C$. This, in conjunction with \eqref{lower bound of eta},  provides a uniform upper bound for  $u_{x_{n}x_{n}}(x_{0})$.

Now, given that all the eigenvalues of $U(x_0)$ are bounded from above, we can conclude that $\lambda(U)(x_{0})$  lies in a compact subset of $\Gamma$. Using the ellipticity of $f$ again, we see that
\begin{equation*}
\delta_{\infty} \geq\delta_{R_{0}}=f(\lambda'(\tilde{U}
(x_{0})),R_{0})-\psi(x_{0},u)>0
\end{equation*}
when $R_{0}$ is sufficiently large. Consequently, we have finished the proof of \eqref{delta} if we choose $\hat{\delta}=\min\{ \epsilon_{0},\delta_{R_{0}}\}$ for a large constant $R_{0}$.

\begin{remark}\label{higher order}
If $\|\psi\|_{C^{1,1}}$ is uniform 
 bounded, one can apply the Evans-Krylov local argument to conclude the interior $C^{2,\alpha}$ estimate. Then, for every subdomain $\Omega'\subset \Omega$ and for every $\alpha\in (0,1)$, there exists a uniform constant $C>0$ such that
\begin{equation*}
\|u\|_{C^{2,\alpha}(\overline{\Omega'})}\leq C. 
\end{equation*}
The interior $C^{\infty}$ estimate can be also derived using the Schauder theory and a standard iterating argument. We therefore obtain $u\in  C^{\infty}(\Omega) \cap C^{1,1}(\overline{\Omega})$, which completes the proof of Theorem \ref{existence theorem}.
\end{remark}

\section{The Dirichlet eigenvalue problem}
In this section, drawing inspiration from the Lions' approximation strategy, we will complete the proof of Theorem \ref{eigenvalue theorem} in the next three steps: 
\begin{itemize}\itemsep 0.5em 
\item[Step 1:]  We solve a family of Dirichlet problems \eqref{family} that blow up as the parameter approaches a threshold value $\lambda_{1}$.
\item[Step 2:] After a normalization, we apply the previously established estimates and the Arzela-Ascoli theorem to exact a subsequnce that converges to a solution $(\lambda_{1},u_1)$ of \eqref{eigenvalue problem}.
\item[Step 3:] We demonstrate that the solutions of \eqref{eigenvalue problem} are unique, thereby confirming that $(\lambda_{1},u_1)$ is precisely the solution we are seeking.
\end{itemize}

\subsection{Lions'  approximation strategy}\label{Lions}
Following the procedure of Lions \cite{L85}, we may consider a family of Dirichlet problems as follows:
\begin{equation}\label{family}
\left\{\begin{array}{ll}
\begin{matrix}
\begin{aligned}
F(D^2u) =&\, 1-\lambda u  \\
 u= &\, 0
\end{aligned} 
&
\begin{aligned}
\quad &\mbox{in } \Omega,  \\
\quad &\mbox{on } \partial\Omega.
\end{aligned} 
\end{matrix}
\end{array}\right.  
\end{equation}
where $\lambda$ is a parameter. For every $\lambda\geq 0$, we denote the solution of \eqref{family}, if it exists, by $u_{\lambda}$ and define
\begin{equation}\label{Lambda}
I=\big\{\lambda\geq 0: \eqref{family} \textrm{ admits a solution } u_{\lambda}\in C^{1,1}(\overline{\Omega}) \big\}\,, \qquad \Lambda_{1}=\sup_{\lambda\in  I}\lambda\,.
\end{equation}

We begin by noting that in the case of $\lambda=0$, Equation \eqref{family} is non-degenerate. This allows us to adapt the approach of Theorem \ref{existence theorem} to produce a smooth $\Gamma$-admissible solution $u_0$. Therefore, the set $I$ is nonempty and $\Lambda_{1}\geq 0$.

Next, we show that $\Lambda_{1}$ possesses explicit numerical upper or lower bounds.
\begin{lemma}\label{lambda1}
Let $\mu_{1}$ be the first non-zero eigenvalue of the Laplace operator on $\Omega$. We have
\begin{equation}\label{upper lower}
\|u_0\|^{-1}_{C^{0}(\overline{\Omega})}\leq \Lambda_{1}\leq \frac{\mu_{1}}{n}\,.
\end{equation}
\end{lemma}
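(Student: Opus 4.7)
The statement splits into two independent inequalities; I would handle them separately, using Theorem~\ref{existence theorem} for the lower bound and Urbas's inequality \eqref{urbas} for the upper bound.

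\textbf{Lower bound.} My plan is to prove $[0,\|u_0\|_{C^0(\overline\Omega)}^{-1}) \subset I$ by invoking Theorem~\ref{existence theorem} with sub- and super-solutions built from $u_0$ itself. Fix $\lambda \in [0,\|u_0\|_{C^0}^{-1})$. I take $\overline{u}=u_0$ as a supersolution: since $u_0 \le 0$ we have $F(D^2u_0)=1 \le 1-\lambda u_0$, and $u_0=0$ on $\partial\Omega$. For the subsolution I set $\underline u=\alpha u_0$ with $\alpha \ge 1$; by the degree-one homogeneity from Definition~\ref{elliptic hessian operator}, $F(D^2(\alpha u_0))=\alpha$, so the subsolution inequality reduces to the pointwise condition $\alpha(1+\lambda u_0)\ge 1$. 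Choosing the uniform constant $\alpha=(1-\lambda\|u_0\|_{C^0})^{-1}$, which is finite precisely because $\lambda<\|u_0\|_{C^0}^{-1}$, gives $\alpha \ge (1+\lambda u_0)^{-1}$ everywhere in $\Omega$, and $\Gamma$-admissibility of $\alpha u_0$ is automatic from the cone property of $\Gamma$. Since $\underline u \le \overline u \le 0$, with zero boundary values, Theorem~\ref{existence theorem} supplies a solution $u_\lambda \in C^\infty(\Omega)\cap C^{1,1}(\overline\Omega)$; taking the supremum over $\lambda$ yields $\Lambda_1 \ge \|u_0\|_{C^0}^{-1}$.

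\textbf{Upper bound.} The plan is to test the equation against the first Dirichlet eigenfunction $\phi_1>0$ of the Laplacian on $\Omega$, where $-\Delta \phi_1=\mu_1 \phi_1$ and $\phi_1|_{\partial\Omega}=0$. For any $\lambda \in I$ with solution $u_\lambda$, Urbas's inequality \eqref{urbas} gives
\[
1-\lambda u_\lambda = F(D^2u_\lambda) \le \tfrac{1}{n}\Delta u_\lambda.
\]
Multiplying by $\phi_1$, integrating over $\Omega$, and using Green's second identity (both $\phi_1$ and $u_\lambda$ vanish on $\partial\Omega$) together with $-\Delta \phi_1=\mu_1\phi_1$ rearranges to
\[
\int_\Omega \phi_1\,dx \;\le\; \Bigl(\lambda-\tfrac{\mu_1}{n}\Bigr)\int_\Omega u_\lambda \phi_1\,dx.
\]
Because $\Gamma\subset \Gamma_1$, the admissible function $u_\lambda$ is strictly subharmonic, so the strong maximum principle forces $u_\lambda<0$ in $\Omega$ and hence $\int_\Omega u_\lambda \phi_1<0$, while $\int_\Omega \phi_1>0$. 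Both signs together compel $\lambda-\mu_1/n<0$, and taking the supremum yields $\Lambda_1 \le \mu_1/n$.

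\textbf{Main obstacle.} The delicate point is entirely on the lower-bound side: one must construct a subsolution uniformly in $x$, whereas the natural pointwise condition $\alpha \ge (1+\lambda u_0(x))^{-1}$ depends on $x$ through $u_0$. The cutoff $\lambda<\|u_0\|_{C^0}^{-1}$ is what allows the global choice $\alpha=(1-\lambda\|u_0\|_{C^0})^{-1}$ to dominate every pointwise requirement while keeping $\alpha u_0 \le u_0$, and it is precisely this step that pins the quantitative lower bound. The upper bound, by contrast, is essentially a one-line consequence of \eqref{urbas} and integration by parts against $\phi_1$.
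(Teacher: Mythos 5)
Your proposal is correct and follows essentially the same route as the paper: the lower bound via the subsolution $C_\lambda u_0$ with $C_\lambda=(1-\lambda\|u_0\|_{C^0})^{-1}$ and Theorem~\ref{existence theorem}, and the upper bound via Urbas's inequality \eqref{urbas}. The only difference is that the paper stops at the differential inequality $\tfrac1n\Delta u_\lambda> -\lambda u_\lambda$ and asserts the conclusion, whereas you supply the standard justification by pairing with the first Dirichlet eigenfunction of the Laplacian.
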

\begin{proof}
Assuming that the pair $(\lambda,u_{\lambda})$ represents a solution to the Dirichlet problem \eqref{family}, it follows from \eqref{urbas} that
\begin{equation*}
\frac{1}{n}\,\sigma_1(D^2 u_{\lambda})\geq F(D^2u_{\lambda})=1-\lambda u_{\lambda}> -\lambda u_{\lambda}\,.
\end{equation*}
This shows the second inequality of \eqref{upper lower}.

We now proceed to verify the first inequality of \eqref{upper lower}. For an arbitrary positive constant $\lambda<\|u_0\|^{-1}_{C^{0}(\overline{\Omega})}$, we set $C_{\lambda}=(1-\lambda\|u_0\|_{C^{0}(\overline{\Omega})})^{-1}$. Then, according to 1-homogeneity of $F$, we infer that
\begin{equation*}
F\big(D^2 (C_{\lambda}u_{0}) \big)=C_{\lambda}\geq 1-\lambda C_{\lambda}u_{0}\geq 1\,.
\end{equation*}
That is, $C_{\lambda}u_0$ is a subsolution of the Dirichlet problem \eqref{family}. Observe that $u_0$ is a supersolution of   \eqref{family}. In view of Theorem \ref{existence theorem}, the Dirichlet problem \eqref{family} admits a solution $u_{\lambda}\in C^{\infty}(\Omega)\cap C^{1,1}(\overline{\Omega})$. This proves that each $\lambda<\|u_0\|^{-1}_{C^{0}(\overline{\Omega})}$ satisfies the inequality $\lambda\leq \Lambda_{1}$.  Thus, we have established the first inequality presented in \eqref{upper lower}.
\end{proof}

\begin{lemma}\label{short time existence}  For every $\lambda\in [0,\Lambda_{1})$, the Dirichlet problem \eqref{family} admits a $\Gamma$-admissible solution $u_{\lambda}$, i.e. $[0,\Lambda_{1})\subset I$.
\end{lemma}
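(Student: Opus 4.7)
The plan is to realize each $\lambda\in[0,\Lambda_1)$ as the parameter of a Dirichlet problem to which Theorem~\ref{existence theorem} applies, by constructing a sub/super-solution pair from already-known solutions of \eqref{family}. The central observation is the monotonicity of $\psi(x,u)=1-\lambda u$ in $u$ on $(-\infty,0]$: increasing $\lambda$ increases the right-hand side, so the solution $u_{\lambda_0}$ corresponding to a larger parameter $\lambda_0$ automatically serves as a subsolution for any smaller parameter $\lambda$.

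Concretely, I would first prove the downward closure of $I$, namely that if $\lambda_0\in I$ and $0\le\lambda\le\lambda_0$, then $\lambda\in I$; combined with the supremum definition of $\Lambda_1$ this immediately yields $[0,\Lambda_1)\subset I$. Fix $\lambda_0\in I$ with $\Gamma$-admissible solution $u_{\lambda_0}\in C^{1,1}(\overline{\Omega})$. Since $\Gamma\subset\Gamma_1$ implies $\Delta u_{\lambda_0}>0$, the maximum principle and $u_{\lambda_0}|_{\partial\Omega}=0$ give $u_{\lambda_0}\le0$ on $\overline{\Omega}$. Then for any $\lambda\in[0,\lambda_0]$,
\begin{equation*}
F(D^2u_{\lambda_0})=1-\lambda_0 u_{\lambda_0}\ge 1-\lambda u_{\lambda_0},
\end{equation*}
so $u_{\lambda_0}$ is a subsolution of \eqref{family} at level $\lambda$ with zero boundary trace. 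Applying the same maximum-principle argument to $u_0$ gives $u_0\le0$, hence
\begin{equation*}
F(D^2u_0)=1\le 1-\lambda u_0,
\end{equation*}
showing that $u_0$ is a supersolution with $u_0|_{\partial\Omega}=0$.

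It remains to verify that $\psi(x,u)=1-\lambda u$ satisfies the structural hypotheses of Theorem~\ref{existence theorem}: it is smooth, hence $C^{1,1}$, on $\overline{\Omega}\times(-\infty,0]$, and $\psi\ge 1>0$ throughout that range, so in particular $\psi(x,u)>0$ on $\{u<0\}$. Invoking Theorem~\ref{existence theorem} then yields a solution $u_\lambda\in C^\infty(\Omega)\cap C^{1,1}(\overline{\Omega})$ of \eqref{family}, which proves $\lambda\in I$. Finally, for arbitrary $\lambda\in[0,\Lambda_1)$, the definition of $\Lambda_1$ as a supremum furnishes some $\lambda_0\in I$ with $\lambda<\lambda_0\le\Lambda_1$, and the monotonicity step just established places $\lambda$ in $I$.

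The heavy analytic lifting -- gradient bound, global and boundary Hessian bounds, and Evans--Krylov-type regularity -- is already packaged inside Theorem~\ref{existence theorem}, so no essential obstacle remains. The only subtle point is the sign $u_{\lambda_0}\le0$, which is exactly what licenses the monotonicity inequality turning $u_{\lambda_0}$ into a subsolution for smaller $\lambda$; this relies crucially on the cone inclusion $\Gamma\subset\Gamma_1$ built into the standing hypotheses.
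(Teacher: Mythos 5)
Your proposal is correct and follows essentially the same route as the paper: pick $\tilde\lambda\in I$ with $\lambda<\tilde\lambda$ from the supremum definition of $\Lambda_1$, use the sign $u_{\tilde\lambda}\le 0$ (from subharmonicity) to turn $u_{\tilde\lambda}$ into a subsolution at level $\lambda$, take $u_0$ as the supersolution, and invoke Theorem~\ref{existence theorem}. The extra verification of the structural hypotheses on $\psi(x,u)=1-\lambda u$ is a welcome detail the paper leaves implicit.
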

\begin{proof} Since $\lambda<\Lambda_{1}$  and we have already established that $u_0$ is a supersolution of  the Dirichlet problem \eqref{family}, it is sufficient to construct a subsolution. 

From the definition of $\Lambda_{1}$ as a supremum, there exists a constant $\tilde{\lambda}$ with $\lambda<\tilde{\lambda}<\Lambda_{1}$ such that $\tilde{\lambda}\in I$. It follows that $u_{\tilde{\lambda}}$ is a desired subsolution of \eqref{family}. Indeed, since $u_{\tilde{\lambda}}\leq 0$ in $\Omega$ 
\begin{equation*}
F(D^2u_{\tilde{\lambda}})=1-\tilde{\lambda} u_{\tilde{\lambda}} \geq 1-\lambda u_{\tilde{\lambda}}\,.  
\end{equation*}
This ends our proof of lemma.
\end{proof}

\begin{lemma}\label{step 4}
$\|u_{\lambda}\|_{C^{0}(\overline{\Omega})}$ tends to infinity as $\lambda\nearrow \Lambda_{1}$.   
\end{lemma}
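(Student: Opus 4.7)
The plan is to argue by contradiction: suppose $\|u_\lambda\|_{C^0(\overline{\Omega})}$ stays bounded along some sequence $\lambda_k \nearrow \Lambda_1$, with $\|u_{\lambda_k}\|_{C^0(\overline{\Omega})} \leq M$ for all $k$. Since the right-hand side of \eqref{family} is $\psi(x,u) = 1 - \lambda_k u$ with $\lambda_k \leq \Lambda_1$ and $|u_{\lambda_k}| \leq M$, the quantities $\sup \psi$, $\|D_x \psi\|_{C^1}$, and $\|D_u \psi\|_{C^1}$ are all bounded independently of $k$. Propositions \ref{gradient estimate}, \ref{int hessian estimate}, and \ref{boundary C2} then deliver a uniform $C^{1,1}(\overline{\Omega})$ bound on $u_{\lambda_k}$, and interior Evans-Krylov plus Schauder theory (Remark \ref{higher order}) promote this to uniform $C^{k}_{\mathrm{loc}}(\Omega)$ bounds for every $k$.

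Next, Arzela-Ascoli together with a diagonal extraction yields a subsequence $u_{\lambda_k} \to u_*$ in $C^{1,\alpha}(\overline{\Omega}) \cap C^\infty_{\mathrm{loc}}(\Omega)$, whose limit $u_* \in C^\infty(\Omega) \cap C^{1,1}(\overline{\Omega})$ solves $F(D^2 u_*) = 1 - \Lambda_1 u_*$ in $\Omega$ with $u_* = 0$ on $\partial\Omega$. Because $F(D^2 u_*) \geq 1 > 0$ throughout $\Omega$, the Hessian $D^2 u_*$ sits in the interior of $\mathscr{M}(\Gamma)$, so $u_*$ is genuinely $\Gamma$-admissible; the comparison principle further gives $u_* \leq 0$, and clearly $u_* \not\equiv 0$ (otherwise $0 = 1$).

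The last step is to parlay $u_*$ into a contradiction with the maximality in \eqref{Lambda}. Choose $\epsilon > 0$ small enough that $\epsilon \|u_*\|_{C^0(\overline{\Omega})} < 1$, set $\lambda^* = \Lambda_1 + \epsilon$, and define
\begin{equation*}
c = \frac{1}{1 - \epsilon\|u_*\|_{C^0(\overline{\Omega})}} > 1, \qquad v = c\, u_*.
\end{equation*}
The $1$-homogeneity of $F$ combined with the equation for $u_*$ gives $F(D^2 v) = c(1 - \Lambda_1 u_*)$, and the choice of $c$ is arranged precisely so that $c - 1 \geq c\,\epsilon\,(-u_*)$ pointwise; rearranging yields $F(D^2 v) \geq 1 - \lambda^* v$, so $v$ is a $\Gamma$-admissible subsolution of \eqref{family} at parameter $\lambda^*$ with $v|_{\partial\Omega} = 0$. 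The solution $u_0$ from the start of Section \ref{Lions} serves as a supersolution since $F(D^2 u_0) = 1 \leq 1 - \lambda^* u_0$. Theorem \ref{existence theorem} then produces a $\Gamma$-admissible solution $u_{\lambda^*} \in C^\infty(\Omega) \cap C^{1,1}(\overline{\Omega})$, contradicting $\Lambda_1 = \sup I$.

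The most delicate point is ensuring that the a priori bounds for $\{u_{\lambda_k}\}$ are truly uniform: one must carefully track the explicit dependence of $C_1$ and $C_2$ in Propositions \ref{gradient estimate}-\ref{boundary C2} through the $C^{1,1}$ norm of $\psi(x,u) = 1 - \lambda_k u$, which is in turn controlled solely by the assumed $C^0$ bound and by $\Lambda_1$. Once this dependence is correctly accounted for, the compactness step proceeds routinely and the homogeneous rescaling $v = c u_*$ does the rest, the whole argument being a close analogue of the Lions continuation method for Monge-Amp\`ere \cite{L85}.
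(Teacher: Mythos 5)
Your proposal is correct and follows essentially the same route as the paper: argue by contradiction, use the uniform $C^{1,1}$ and interior estimates plus Arzel\`a--Ascoli to extract a limit $u_*$ solving the equation at $\Lambda_1$, then rescale $u_*$ by the factor $c=(1-\epsilon\|u_*\|_{C^0})^{-1}$ to manufacture a subsolution at $\Lambda_1+\epsilon$ and invoke Theorem \ref{existence theorem} to contradict the maximality of $\Lambda_1$. The paper's rescaling constant $C_\delta$ is exactly your $c$, so the two arguments coincide.
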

\begin{proof}
Suppose the contrary, then there exists a sequence of numbers $\mu_{j}\in \Lambda$ such that $\mu_{j}\rightarrow \Lambda_{1}$ with $\|u_{\mu_{j}}\|_{C^{0}(\overline{\Omega})}\leq N<+\infty$ for a constant $N$ independent of $l$. Thus, we have $-N\leq u_{\mu_{j}}\leq 0$ on $\overline{\Omega}$ and whence
\begin{equation*}
 F(D^2u_{0})=1\leq F(D^2u_{\mu_{j}})=1-\mu_{j}u_{\mu_{j}}\leq 1+N\Lambda_{1}= F\big((1+N\Lambda_{1})D^2u_{0}\big)\,.
\end{equation*}
Applying the comparison principle, it follows that 
\begin{equation*}
(1+N\Lambda_{1})u_{0}\leq u_{\mu_{j}}\leq u_{0}<0 \qquad \textrm{ in } \Omega.
\end{equation*}

From Propositions \ref{gradient estimate}, \ref{int hessian estimate} and \ref{boundary C2}, we deduce that $\|u_{\mu_{j}}\|_{C^2(\overline{\Omega})}\leq C$ for a uniform constant $C$. Furthermore, by applying the standard Evans-Krylov theory for non-degenerate uniformly elliptic equations, we also obtain local bounds in $C^{k}(\Omega)$ for all $k\geq 1 $. Then,  by extracting a subsequence if necessary, we may assume $\{u_{\mu_{j}}\}$ converges to some function $u_{\mu_{\infty}}\in C^{1,1}(\overline{\Omega})\cap C^{\infty}(\Omega)$ as $\mu_{j}\nearrow \lambda_{1}$.

As in the proof of Lemma \ref{lambda1}, for every $0<\delta<\|u_{\mu_{\infty}}\|^{-1}_{C^{0}(\overline{\Omega})}$, we define a constant $C_{\delta}=(1-\delta\|u_{\mu_{\infty}}\|_{C^{0}(\overline{\Omega})})^{-1}$ to ensure that
\begin{equation*}
F\big(D^2(C_{\delta}u_{\mu_{\infty}})\big)= C_{\delta}(1-\Lambda_{1} u_{\mu_{\infty}})\geq 1-(\delta+\Lambda_{1})C_{\delta}u_{\mu_{\infty}},
\end{equation*}
i.e. $C_{\delta}u_{\mu_{\infty}}$ serves as a subsolution of \eqref{family} with $\lambda=\delta+\Lambda_{1}$. 

Observe that $u_{0}$ remains a supersolution of \eqref{family}. Thanks to Theorem  \ref{existence theorem}, then the Dirichlet problem \eqref{family} admits a global $C^{1,1}$ solution for every $\Lambda_{1}\leq  \lambda<\Lambda_{1}+\delta$, which contradicts our definition of $\Lambda_{1}$. This asserts the lemma.
\end{proof}

\subsection{Proof of Theorem \ref{eigenvalue theorem}}
For every constant $0\leq \mu<\Lambda_{1}$, we consider the following normalization for $u_{\mu}$:
\begin{equation*}
v_{\mu}:=\frac{u_{\mu}}{\|u_{\mu}\|_{C^{0}(\overline{\Omega})}}\,.
\end{equation*}
It can be easily seen that $v_{\mu}$  is a solution of the Dirichlet problem:
\begin{equation*}
\left\{\begin{array}{ll}
\begin{matrix}
\begin{aligned}
F(D^2v_{\mu}) =&\, \|u_{\mu}\|^{-1}_{C^{0}(\overline{\Omega})}-\mu v_{\mu}  \\
 v_{\mu}= &\, 0   \\
\|v_{\mu}\|_{C^{0}(\overline{\Omega})}=&\, 1.
\end{aligned} 
&
\begin{aligned}
\quad &\mbox{in } \Omega,  \\
\quad &\mbox{on } \partial\Omega.
\end{aligned} 
\end{matrix}
\end{array}\right.
\end{equation*}
In accordance with Lemma \ref{step 4}, there exists a constant $\nu_{1}$ belonging to the interval $(0,\Lambda_{1})$ such that for every $\mu$ in the range $(\nu_{1},\Lambda_{1})$,  the norm $\|u_{\mu}\|_{C^{0}(\overline{\Omega})}$ is greater than $1$. Besides, since $-1\leq v_{\mu}<0$ in $\Omega$, it follows that 
\[
\|u_{\mu}\|^{-1}_{C^{0}(\overline{\Omega})}-\mu v_{\mu}\leq 1+\mu\leq 1+\Lambda_{1} \qquad  \text{for all }\, \mu\in (\nu_{1},\Lambda_{1})\,.
\]
Applying the comparison principle, we immediately infer that \begin{equation}\label{lower barrier}
(1+\Lambda_{1})u_{0}\leq v_{\mu}\leq 0 \qquad  \text{for all }\, \mu\in (\nu_{1},\Lambda_{1})
\end{equation}
which yields a uniform upper bound on $\|D v_{\mu}\|_{C^{0}(\partial\Omega)}$ independent of $\mu$. Then by Propositions \ref{gradient estimate}, \ref{int hessian estimate} and \ref{boundary C2}, we see that 
\begin{equation*}
\|v_{\mu}\|_{C^2(\overline{\Omega})}\leq C, \qquad  \text{for all }\, \mu\in (\nu_{1},\Lambda_{1})
\end{equation*}
for a constant $C$ independent of $\mu$. 

By virtue of the Arzela-Ascoli theorem, we can extract  a subsequence $\{\mu_{j}\}$ within the interval $ (\nu_{1},\Lambda_{1}) $ that converges to $\Lambda_{1}$, such that $\{v_{\mu_j}\}$ converges uniformly to specific function $u_{1}\in C^{0}(\overline{\Omega})$. Furthermore, $u_1$ is indeed a weak solution to the following Dirichlet problem: 
\begin{equation}\label{limit equation}
\left\{\begin{array}{ll}
\begin{matrix}
\begin{aligned}
F(D^2u_{1}) =&\, -\Lambda_{1} u_{1}  \\
 u_{1}= &\, 0   \\
\|u_{1}\|_{C^{0}(\overline{\Omega})}=&\, 1.
\end{aligned} 
&
\begin{aligned}
\quad &\mbox{in } \Omega,  \\
\quad &\mbox{on } \partial\Omega.
\end{aligned} 
\end{matrix}
\end{array}\right.
\end{equation}
Below we will exploit the higher order regularity of $u_{1}$, which can be derived through a barrier argument. 
\begin{lemma}
Let $u_{1}$ be a continuous solution of \eqref{limit equation}. Then we have $u_1\in C^{\infty}(\Omega)\cap C^{1,1}(\overline{\Omega})$.
\end{lemma}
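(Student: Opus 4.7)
The plan is to handle the two regularity assertions by separate arguments. The global $C^{1,1}(\overline{\Omega})$-regularity comes nearly for free from the uniform second-order estimates already established for the approximating family $\{v_{\mu_j}\}$, whereas the interior smoothness requires promoting the degenerate equation \eqref{limit equation} to a non-degenerate one on compact subdomains, and this in turn hinges on showing that $u_1$ is strictly negative in the interior.

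For the global $C^{1,1}$-bound I would simply pass the estimate $\|v_{\mu_j}\|_{C^2(\overline{\Omega})}\leq C$ (from Propositions \ref{gradient estimate}, \ref{int hessian estimate}, \ref{boundary C2}) to the limit. Since $v_{\mu_j}\to u_1$ uniformly and the gradients $\nabla v_{\mu_j}$ are uniformly Lipschitz with constant $C$, for every $x,y\in\overline{\Omega}$ one has
\[
|\nabla u_1(x)-\nabla u_1(y)|=\lim_{j\to\infty}|\nabla v_{\mu_j}(x)-\nabla v_{\mu_j}(y)|\leq C|x-y|,
\]
so $u_1\in C^{1,1}(\overline{\Omega})$ with the same constant.

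The interior regularity is the heart of the matter. The first step will be to establish $u_1<0$ strictly inside $\Omega$: since $\Gamma\subset\Gamma_1$, every $\Gamma$-admissible function is subharmonic, so $u_1$ is subharmonic; combined with $u_1\leq 0$ on $\overline{\Omega}$, $u_1|_{\partial\Omega}=0$, and $\|u_1\|_{C^0(\overline{\Omega})}=1$ (which rules out $u_1\equiv 0$), the strong maximum principle forces $u_1<0$ in $\Omega$. Then on any subdomain $\Omega'\Subset\Omega$, continuity gives $u_1\leq -c$ for some $c=c(\Omega')>0$, and consequently $v_{\mu_j}\leq -c/2$ on $\overline{\Omega'}$ for $j$ large. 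This upgrades the approximating equation
\[
F(D^2 v_{\mu_j})=\|u_{\mu_j}\|_{C^0(\overline{\Omega})}^{-1}-\mu_j v_{\mu_j}\geq \tfrac{\mu_j c}{2}\geq c'>0 \quad \textrm{on } \Omega'
\]
to a uniformly elliptic non-degenerate one. From here the interior Evans-Krylov theorem applied to each $v_{\mu_j}$ yields $\|v_{\mu_j}\|_{C^{2,\alpha}(\Omega'')}\leq C$ uniformly in $j$ for every $\Omega''\Subset\Omega'$ and some $\alpha\in (0,1)$, and a standard Schauder bootstrap---exploiting that the right-hand side of the approximating equation is smooth in $v_{\mu_j}$---promotes this to uniform $C^k$-bounds for every $k$. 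Passing to the limit yields $u_1\in C^\infty(\Omega'')$, and since $\Omega''$ was arbitrary, $u_1\in C^\infty(\Omega)$.

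The main obstacle is precisely the interior strict negativity of $u_1$: without it, the right-hand side of \eqref{limit equation} can approach zero and the ellipticity collapses; with it, the boundary degeneracy is effectively isolated and the remaining analysis reduces to the standard Evans-Krylov plus Schauder machinery. No new a priori estimates beyond those already proved in Sections \ref{First order estimate} and \ref{Second order estimates} are needed.
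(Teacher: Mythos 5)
Your proof is correct and its overall architecture matches the paper's: both arguments hinge on showing that the approximating solutions $v_{\mu_j}$ are uniformly strictly negative on compact interior subsets, so that the right-hand side $\|u_{\mu_j}\|^{-1}_{C^{0}}-\mu_j v_{\mu_j}$ is bounded below by a positive constant there and the interior Evans--Krylov/Schauder machinery of Remark \ref{higher order} applies; the global $C^{1,1}$ bound in both cases ultimately rests on the uniform estimates of Propositions \ref{gradient estimate}, \ref{int hessian estimate} and \ref{boundary C2}. Where you differ is in the mechanism for the key negativity step. You apply the strong maximum principle directly to the subharmonic limit $u_1$ (which is legitimate: $u_1$ is a uniform limit of $\Gamma$-admissible, hence subharmonic, functions, vanishes on $\partial\Omega$, and is not identically zero since $\|u_1\|_{C^0}=1$), and then transfer the bound $u_1\le -c(\Omega')$ to $v_{\mu_j}$ by uniform convergence; the constants you obtain depend on the compact subset. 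The paper instead works at the level of the approximating sequence: starting from $v_{\mu_j}\le -\tfrac13$ on a fixed ball and the Urbas inequality $\Delta v_{\mu_j}\ge nF(D^2v_{\mu_j})\ge \tfrac{n\nu_1}{3}$ there, it builds one explicit Laplacian barrier $v$ with $v_{\mu_j}\le v<0$ in $\Omega$ for all large $j$, and then reuses the pair $\big((1+\Lambda_1)u_0,\,v\big)$ as sub/supersolutions to invoke Theorem \ref{existence theorem} for the uniform $C^{1,1}(\overline{\Omega})$ bound. Your route buys simplicity (no barrier construction, and the global $C^{1,1}$ bound is obtained by merely passing the already-stated uniform $C^2$ bounds on $v_\mu$ to the limit); the paper's route buys a single quantitative comparison function valid for the whole sequence, which it then recycles in the sub/supersolution framework. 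Both are sound, and you correctly identified the interior strict negativity as the essential point that isolates the boundary degeneracy.
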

\begin{proof}
Given that $\|u_{1}\|_{C^{0}(\overline{\Omega})}=1$ and $u_{1}=0$ on $\partial\Omega$, 
 there exists an interior point $x_0\in\Omega$ such that $u_{1}(x_0)=-1$. Observe that $v_{\mu_j}\rightarrow u_{1}$ implies that 
 \[-v_{\mu_j}(x_0)\geq -u_{1}(x_0)-\frac{1}{3}=\frac{2}{3}\,.\] 
Due to the continuity, there exists a small ball $B_{2r_0}=B(x_0,2r_0)$ centered at $x_0$ within $\Omega$, for some constant $r_0>0$, such that 
\begin{equation*}
v_{\mu_j}\leq -\frac{1}{3}\qquad \textrm{ on }  B_{2r_0}
\end{equation*}
provided $j\in \mathbb{N}$ is large enough. Then, it follows from the \eqref{urbas} that
\[
\Delta v_{\mu_j}\geq nF(D^2v_{\mu_j})\geq -n\mu_{j}v_{\mu_j}\geq \frac{n\nu_{1}}{3} \qquad \textrm{ on }  B_{2r_0}.
\]
Select a smooth function $h:[0,+\infty)\rightarrow \mathbb{R}$ such that 
\begin{equation*}
0\leq h \leq 1, \qquad |h'|+|h''|\leq N 
\end{equation*}
for some constant $N $ that depends only on $r_0$ and  
\begin{equation*}
h(r)= 
\left\{\begin{array}{ll}
1,\quad&r\leq r_0\,; \\[1mm]
0,& r\geq 2r_0\,.
\end{array}\right.
\end{equation*}
Let $v$ be a smooth solution to the following Dirichlet problem:
\begin{equation*}
\left\{\begin{array}{ll}
\begin{matrix}
\begin{aligned}
\Delta v=&\, \frac{n\nu_{1}}{3}h\big(|x-x_0|\big)  \\
 v= &\, 0
\end{aligned} 
&
\begin{aligned}
\quad &\mbox{in } \Omega,  \\
\quad &\mbox{on } \partial\Omega.
\end{aligned} 
\end{matrix}
\end{array}\right.  
\end{equation*}
It is evident that $\sigma_{1}(D^2v)\leq \sigma_{1}(D^2v_{\mu_j})$ in $\Omega$ and $v=v_{\mu_j}$ on $\partial\Omega$. In light of the comparison principle and a similar argument as presented in 
\eqref{lower barrier}, it follows that 
\begin{equation*}\label{uniform bounded}
(1+\Lambda_{1})u_0\leq v_{\mu_j}\leq v \qquad \textrm{ in } \Omega.
\end{equation*}
Therefore, $\|u_{\mu_j}\|_{C^{0}(\overline{\Omega})}$ is uniform bounded provided $j\in \mathbb{N}$ is large enough. With the help of Theorem \ref{existence theorem}, there exists a uniform constant $C$ such that
\begin{equation}\label{c11 estimates}
\|v_{\mu_j}\|_{C^{1,1}(\overline{\Omega})}\leq C\,.  
\end{equation}

Observe that $v_{\mu_j}$ is a solution to the following equation 
\begin{equation*}
F(D^2v_{\mu_{j}})=\|u_{\mu_{j}}\|^{-1}_{C^{0}(\overline{\Omega})}-\mu_{j} v_{\mu_{j}}=:h_{j} \quad \textrm{in } \Omega\,.
\end{equation*}
Considering the facts that $\mu_{j}\in (\nu_{1},\Lambda_{1}) $ and \eqref{c11 estimates}, we observe that $\|h_{j}\|_{C^{1,1}(\overline{\Omega})}\leq C'$ for a uniform constant $C'$ that is independent of $j$. Performing a similar argument as presented in Remark \ref{higher order}, there exists a uniform constant $C''$ that is independent of $j$ such that
\begin{equation*}
\|v_{\mu_j}\|_{C^{2,\alpha}(\Omega)}\leq C''\,.  
\end{equation*}
As a consequence, we conclude that $u_1\in C^{\infty}(\Omega)\cap C^{1,1}(\overline{\Omega})$ and $(\Lambda_{1},u_1)$ is a solution of \eqref{eigenvalue problem}.
\end{proof}
We are now finally in a position to prove the uniqueness of the solution to the Dirichlet problem \eqref{eigenvalue problem}.
\begin{lemma}
Let $(\Lambda_2,u_2)\in [0,+\infty)\times (C^{\infty}(\Omega)\cap C^{1,1}(\overline{\Omega}))$ be another solution of the eigenvalue problem \eqref{eigenvalue problem}. Then, we have $\Lambda_{2}=\Lambda_{1}$ and $u_2=cu_{1}$ for some constant $c>0$.
\end{lemma}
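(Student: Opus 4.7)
The plan is to compare $u_{2}$ with $u_{1}$ through their pointwise ratio. The $1$-homogeneity of $F$ permits rescaling, so we may normalise $\|u_{2}\|_{C^{0}(\overline{\Omega})}=1$. Since $\Gamma\subset\Gamma_{1}$, both $u_{1}$ and $u_{2}$ are subharmonic; the strong maximum principle then gives $u_{1},u_{2}<0$ in $\Omega$, while the Hopf lemma combined with $u_{1},u_{2}\in C^{1,1}(\overline{\Omega})$ yields $\partial_{\nu}u_{i}>0$ on $\partial\Omega$, with $\nu$ the outward unit normal. Consequently the ratio $r:=u_{2}/u_{1}$ extends to a strictly positive continuous function on $\overline{\Omega}$, whose boundary values are $\partial_{\nu}u_{2}/\partial_{\nu}u_{1}$ by a direct L'H\^opital computation. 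Let $c_{+}:=\max_{\overline{\Omega}}r$ and $c_{-}:=\min_{\overline{\Omega}}r$, attained at some $x_{+},x_{-}\in\overline{\Omega}$. If $c_{+}=c_{-}$, then $u_{2}=cu_{1}$ for the common value $c>0$, and substitution into \eqref{eigenvalue problem} immediately gives $\Lambda_{1}=\Lambda_{2}$; the substantive case is to rule out $c_{-}<c_{+}$.

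To that end, introduce the nonnegative functions $w:=u_{2}-c_{+}u_{1}$ (vanishing at $x_{+}$) and $w':=c_{-}u_{1}-u_{2}$ (vanishing at $x_{-}$). The concavity of $F$ yields $F(B)-F(A)\leq F^{ij}(A)(B-A)_{ij}$. Applied with $(A,B)=(D^{2}u_{2},c_{+}D^{2}u_{1})$ and then with $(A,B)=(c_{-}D^{2}u_{1},D^{2}u_{2})$, together with $1$-homogeneity and the identities $F(D^{2}u_{i})=-\Lambda_{i}u_{i}$, this produces the linear inequalities
\begin{equation*}
\mathcal{L}_{+}w+\Lambda_{2}w\leq c_{+}(\Lambda_{1}-\Lambda_{2})u_{1}, \qquad \mathcal{L}_{-}w'+\Lambda_{2}w'\leq c_{-}(\Lambda_{2}-\Lambda_{1})u_{1},
\end{equation*}
where $\mathcal{L}_{+}:=F^{ij}(D^{2}u_{2})\partial_{ij}$ and $\mathcal{L}_{-}:=F^{ij}(c_{-}D^{2}u_{1})\partial_{ij}$ are linear elliptic operators whose coefficients are bounded and positive definite thanks to the $C^{1,1}$ a priori bounds, the strict ellipticity of $F$, and Condition \textbf{(T)}.

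Suppose for contradiction that $\Lambda_{1}>\Lambda_{2}$. Since $u_{1}<0$ in $\Omega$, the first inequality forces $\mathcal{L}_{+}w<0$ strictly in $\Omega$. If $x_{+}\in\Omega$, the Hessian $D^{2}w(x_{+})$ is positive semidefinite at the interior minimum, so $\mathcal{L}_{+}w(x_{+})\geq 0$, contradicting $\mathcal{L}_{+}w(x_{+})<0$. If $x_{+}\in\partial\Omega$, the boundary identification $c_{+}=\partial_{\nu}u_{2}(x_{+})/\partial_{\nu}u_{1}(x_{+})$ gives $\partial_{\nu}w(x_{+})=0$, while Hopf's lemma applied to the non-constant nonnegative $w$ with $\mathcal{L}_{+}w<0$ forces $\partial_{\nu}w(x_{+})<0$---again a contradiction. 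The symmetric argument using $w'$ rules out $\Lambda_{2}>\Lambda_{1}$, so we conclude $\Lambda_{1}=\Lambda_{2}=:\Lambda$.

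With $\Lambda_{1}=\Lambda_{2}$ the first inequality reduces to $\mathcal{L}_{+}w\leq -\Lambda w\leq 0$. The same dichotomy---strong maximum principle when $x_{+}\in\Omega$, Hopf together with $\partial_{\nu}w(x_{+})=0$ when $x_{+}\in\partial\Omega$---forces $w\equiv 0$, i.e., $u_{2}=c_{+}u_{1}$, which is the sought conclusion with $c=c_{+}>0$. The main technical obstacle is the boundary case, in which the vanishing $\partial_{\nu}w(x_{+})=0$ must be extracted via L'H\^opital on the ratio $u_{2}/u_{1}$---itself dependent on Hopf's lemma for each individual $u_{i}$---while the subsequent application of Hopf to $w$ requires pointwise positive definiteness of the coefficients of $\mathcal{L}_{+}$, secured by Condition \textbf{(T)}.
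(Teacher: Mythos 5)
Your proposal is correct, and it is worth separating its two halves. For the proportionality statement your argument essentially coincides with the paper's: your $c_{-}=\min_{\overline{\Omega}}(u_2/u_1)$ is exactly the paper's sliding constant $t_0=\sup\{t>0: t(-u_1)\le (-u_2)\}$, and both proofs conclude by applying the strong maximum principle and Hopf's lemma to $u_2-t_0u_1$ for the operator linearized at $u_1$, using the same concavity-plus-homogeneity inequality. Where you genuinely diverge is in proving $\Lambda_1=\Lambda_2$: the paper linearizes only at $u_1$, notes $L_1u_1=-\Lambda_1u_1$ and $L_1u_2\ge -\Lambda_2u_2$, and sandwiches both eigenvalues against the Berestycki--Nirenberg--Varadhan principal eigenvalue $\Lambda_1(L_1)$, whereas you extract the same conclusion directly from the two extremal touching points of the ratio $u_2/u_1$, linearizing once at $u_2$ (for $c_+$) and once at $u_1$ (for $c_-$). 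Your route is more self-contained, avoiding principal-eigenvalue theory entirely, at the cost of running the interior/boundary Hopf dichotomy twice and of the L'H\^opital identification of the boundary values of the ratio (which the paper achieves equivalently via the comparison $d\le c_0(-u_2)$ and $(-u_1)\le Cd$). One shared caveat, present in the paper's proof as well and therefore not a gap relative to it: Hopf's lemma (and, in the paper's case, the BNV characterization) is invoked for $F^{ij}(D^2u_i)\partial_i\partial_j$ up to $\partial\Omega$, where the equation degenerates since $F(D^2u_i)\to 0$; Condition {\bf (T)} bounds $\prod_i F^{ii}$ from below but does not by itself yield the uniform two-sided ellipticity near the boundary that these tools formally require, so your appeal to it for ``pointwise positive definiteness'' should not be read as settling that point.
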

\begin{proof} This proof follows a standard argument, which can be found in \cite{ZZ25}, we include it here for completeness.

\textbf{Step 1. $\Lambda_1$ equals to $\Lambda_2$.}  Let us define the elliptic operator
\[
L_1:=\sum_{i,j=1}^{n}F^{ij}(D^2u_1)\partial_{i}\partial_{j}\,.
\]
Since $L_1(u_1)=F(D^2u_1)=-\Lambda_{1}u_1$ and $u_1$ vanishes on $\partial\Omega$, it follows from  \cite[p. 361]{E10} that 
\begin{equation}\label{Evans}
\Lambda_{1}=\mathrm{Re}(\Lambda_1)\geq \Lambda_1(L_1)\,.
\end{equation}
On the other hand, by the characterization of the principle eigenvalue in Berestycki-Nirenberg-Varadhan \cite{BNV94}, we have
\[
\Lambda_{1}(L_1)=\sup\{\lambda\geq 0\,:\,\exists \varphi<0,\, L_1(\varphi)+\lambda\varphi\geq 0\}\,,
\]
where $\varphi\in C^{1,1}(\overline{\Omega})$ and $\varphi=0$ on $\partial\Omega$. Owing to the concavity and homogeneity of the operator $F$, we obtain 
\begin{equation}\label{conc homo}
L_1(u_2)\geq F(D^2u_1+D^2u_2)-F (D^2u_1)\geq F (D^2u_2)=-\Lambda_2u_2.
\end{equation}
Combining this with inequality \eqref{Evans}, we conclude that $\Lambda_2\leq \Lambda_1(L_1)\leq \Lambda_1$. A symmetric argument yields $\Lambda_1\leq \Lambda_2$.    Therefore, $\Lambda_{1}=\Lambda_2.$

\textbf{Step 2. $u_2$ is a positive constant multiple of $u_{1}$.} %We follow the argument of Badiane and Zeriahi \cite{BZ23}, which is ispired by the well-known result of Berestycki-Nirenberg-Varadhan \cite{BNV94}. 
We  claim that there exists $t>0$ such that 
\[0\leq  t(-u_1)\leq (-u_2) \,  \textrm{ in } \, \overline{\Omega}.\] 
To prove this, first note that since $u_2$ is subharmonic, Hopf’s Lemma implies that $\frac{\partial u_2}{\partial n} > 0$ on $\partial\Omega$, where $n$ denotes the outer unit normal vector field on $\partial\Omega$. Let $d$ be the distance function to $\partial\Omega$. Then there exists a uniform constant $c_0 > 0$ such that $d \leq c_0 (-u_2)$ in $\overline{\Omega}$. On the other hand, let $\rho$ be a smooth defining function for $\Omega$ such that $(-\rho) \leq c_1 d$ for some constant $c_1 > 0$. Since both $u_1$ and $\rho$ vanish on $\partial\Omega$, we may write $u_1 = h \rho$ for some smooth function $h$ defined near $\partial\Omega$. It follows that there exists a constant $c_2 > 0$ such that
\[
0\leq (-u_{1})\leq c_{2}(-\rho)\leq c_1c_2d\leq c_0c_1c_2(-u_2).
\]
Thus, the claim holds with $t=(c_0c_1c_2)^{-1}$.

The remainder of the proof follows a standard comparative argument as in \cite{L85}. Define
\[t_0=\sup\{t>0\,:\, 0\leq t(-u_1)\leq (-u_2) \, \textrm{ in } \overline{\Omega}\}.\]
Denote $w=u_2-t_0u_1$ for brevity. It follows from \eqref{conc homo} that $L_1(w)\geq -\lambda_1w$ in $\Omega$. Notice that $w\leq 0$ in $\Omega$ and $w=0$ on $\partial{\Omega}$. There are two cases to consider:
\begin{itemize}\itemsep 0.5em
\item[(i)] Assume that $w=0$ at some point $x_0\in\Omega$.  Since $x_0$ is a maximum point of $w$, the strong maximum principle forces that $w\equiv 0$ in $\overline{\Omega}$. 
\item[(ii)]  If instead $w<0$ in $\Omega$, then $L_1(w)>0$. Applying  Hopf’s Lemma again, there exist a constant $t_1>0$ such that 
\[0\leq t_1(-u_1)\leq (-w)=t_0u_1-u_2,\]
which implies $(t_0+t_1)(-u_1)\leq (-u_2)$ in $\overline{\Omega}$. This  contradicts the definition of $t_0$.
\end{itemize}
In both cases, we conclude that  $u_2=t_0u_1$, which completes the proof.
\end{proof}

\section{Application to the bifurcation-type theory}\label{Application to the bifurcation theorem}
The  bifurcation-type theory for the Monge–Ampère operator was originally established by Lions \cite[Corollary 2]{L85}, in a form analogous to corresponding results for the first eigenvalue of linear elliptic operators. Inspired by this work, we prove in this manuscript that the first eigenvalue of $F$ serves as a bifurcation point for the Dirichlet problem \eqref{Dirichlet problem}.
\begin{proposition}\label{bif}
Let $\psi(x,s)$ be a smooth positive function on $\overline{\Omega}\times (-\infty,0]$ such that 
\[\psi_{s}\geq -\Lambda_{0} > -\Lambda_{1},\]
where $\Lambda_0$ is a given non-negative constant and $\Lambda_1$ denotes the first eigenvalue of $F$ on $\Omega$.  Then the Dirichlet problem \eqref{Dirichlet problem} admits a $\Gamma$-admissible solution $u\in C^{\infty}(\overline{\Omega})$.
\end{proposition}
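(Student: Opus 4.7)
The plan is to reduce the problem to Theorem \ref{existence theorem} by constructing an explicit pair of $\Gamma$-admissible sub- and super-solutions for (\ref{Dirichlet problem}), and then upgrading the boundary regularity by exploiting the non-degeneracy $\psi>0$. The entire argument hinges on the strict inequality $\Lambda_0<\Lambda_1$, which provides enough ``budget'' to absorb the one-sided growth of $\psi$ in $u$ into the eigenvalue balance.

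For the subsolution I would use the Lions family $u_\lambda$ of Section 5. The key preliminary observation is that the one-sided bound $\psi_s\geq -\Lambda_0$ integrates to
\[
\psi(x,s)\leq \psi(x,0)+\Lambda_0(-s),\qquad s\leq 0.
\]
Choose any $\lambda\in(\Lambda_0,\Lambda_1)$ (nonempty by hypothesis), let $u_\lambda$ be the solution of (\ref{family}) produced by Lemma \ref{short time existence}, and set $M=\sup_{\overline{\Omega}}\psi(\cdot,0)$. I claim that for every $C\geq M$ the function $\underline{u}:=Cu_\lambda$ is a $\Gamma$-admissible subsolution: by $1$-homogeneity of $F$ and the equation for $u_\lambda$,
\[
F(D^2\underline{u})=C+C\lambda(-u_\lambda)\geq \psi(x,0)+\Lambda_0 C(-u_\lambda)\geq \psi(x,Cu_\lambda),
\]
since $C\geq M\geq\psi(x,0)$ and $C(\lambda-\Lambda_0)(-u_\lambda)\geq 0$. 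Moreover $\underline{u}|_{\partial\Omega}=0$ and $\Gamma$-admissibility is preserved under positive scaling.

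For the supersolution, take $\overline{u}:=\varepsilon u_0$ where $u_0$ is the $\lambda=0$ solution of (\ref{family}) and $\varepsilon>0$ is small. Then $F(D^2\overline{u})=\varepsilon$, while by continuity $\psi(x,\varepsilon u_0)\geq \tfrac12\min_{\overline{\Omega}}\psi(\cdot,0)>0$ for all $\varepsilon$ sufficiently small; choosing $\varepsilon$ below this positive minimum makes $\overline{u}$ a $\Gamma$-admissible supersolution with $\overline{u}|_{\partial\Omega}=0$, and comparison between $u_0$ and $u_\lambda$ (using $F(D^2u_\lambda)\geq 1 = F(D^2 u_0)$) gives $u_\lambda\leq u_0$, hence $\underline{u}\leq\overline{u}$ after taking $C\geq \varepsilon$. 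Theorem \ref{existence theorem} then delivers a $\Gamma$-admissible solution $u\in C^\infty(\Omega)\cap C^{1,1}(\overline{\Omega})$; since $\psi$ is strictly positive on the compact range $\overline{\Omega}\times[\min\underline{u},0]$, the equation is genuinely non-degenerate up to the boundary, and boundary Evans--Krylov estimates together with a Schauder bootstrap promote $u$ to $C^\infty(\overline{\Omega})$. The one delicate point I would expect is the boundary-layer balance in the subsolution: on $\partial\Omega$ the ``extra room'' $C(\lambda-\Lambda_0)(-u_\lambda)$ vanishes, which is precisely why a scaled eigenfunction $Cu_1$ alone cannot play the role of $\underline{u}$ (its equation lacks the additive $+1$ that dominates $\psi(x,0)$ near the boundary) and why one must draw on the Lions family with $\lambda>\Lambda_0$.
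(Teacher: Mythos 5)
Your subsolution is exactly the paper's: both take $\underline{u}=Cu_{\lambda}$ for $\lambda\in(\Lambda_0,\Lambda_1)$ with $C$ exceeding $\sup\psi(\cdot,0)$, and both use the integrated form $\psi(x,s)\le\psi(x,0)+\Lambda_0(-s)$ together with $1$-homogeneity. After that the routes diverge: you add a supersolution $\varepsilon u_0$ and invoke Theorem \ref{existence theorem} as a black box, whereas the paper runs its own continuity method $F(D^2u_t)=t\psi(x,u_t)+(1-t)F(D^2u_N)$, proving openness via invertibility of $L_t=\Delta_F-t\psi_s$ (which uses $\lambda_1(\Delta_F)>\lambda\ge -t\psi_s$) and closedness via a uniform $L^\infty$ bound $u_N\le u_t\le 0$ obtained from a barrier built on the first eigenfunction of the averaged linearized operator $\Delta_{\overline F}$.

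The divergence is not cosmetic, and this is where I see a genuine gap. When $\Lambda_0>0$ the function $\psi$ may be strictly decreasing in $s$, so the comparison principle for $F(D^2u)=\psi(x,u)$ fails, and the sandwich $\underline{u}\le u\le\overline{u}$ is \emph{not} automatic for solutions of \eqref{Dirichlet problem} or of any deformation of it. All of the a priori estimates behind Theorem \ref{existence theorem} (Propositions \ref{gradient estimate}, \ref{int hessian estimate}, \ref{boundary C2}) are stated in terms of $C_0=\|u\|_{C^0}$, so the existence mechanism must first produce a uniform $C^0$ bound; with $\psi_s<0$ this bound is precisely the nontrivial step, and it is exactly what the paper's Step 4 supplies through the spectral inequality $\lambda_1(\Delta_{\overline F})>-t\overline{\psi}_s$ and the eigenfunction barrier. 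Likewise, the linearized operator carries the positive zeroth-order term $-t\psi_s$, so even solvability of the linearized problem along any deformation needs the gap $\Lambda_0<\Lambda_1$; this is invisible in your argument. In short, by citing Theorem \ref{existence theorem} for a $\psi$ that is non-monotone in $u$ you assume away the core difficulty of the proposition: the hypothesis $\Lambda_0<\Lambda_1$ must be used not only to build $\underline{u}$ but also to control the solutions themselves. The remaining ingredients of your write-up are fine: the supersolution $\varepsilon u_0$ with $\varepsilon\le\min\psi$ over the relevant compact set, the ordering $Cu_\lambda\le\varepsilon u_0$, and the final boundary bootstrap to $C^\infty(\overline{\Omega})$ using $\psi\ge m>0$ (a point the paper itself leaves implicit).
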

\begin{proof} 
The proof is based on the approach of \cite{CLM23}, which was originally formulated for complex Hessian operators.

{\bf Step 1. Construction of subsolution.} To establish existence, we first construct a subsolution to \eqref{Dirichlet problem}. Fix $\lambda \in (\Lambda_0, \Lambda_1)$. By Lemma \ref{short time existence}, there exists a $\Gamma$-admissible solution $u_{\lambda}$ to \eqref{family}. For any constant $N >\|\psi(\cdot,0)\|_{L^{\infty}} > 0$, define $u_{N} = N u_{\lambda}$. Then, using the homogeneity of $F$ and the assumption $\psi_{s} \geq -\Lambda_0>-\lambda$, we obtain
\begin{equation}\label{sub}
F(D^2 u_{N}) =N-\lambda u_{N}\geq \|\psi(\cdot,0)\|_{L^{\infty}}-\lambda u_{N}>\psi(x,u_{N})\,,
\end{equation}
which shows that $u_{N}$ is a subsolution of \eqref{Dirichlet problem}.

{\bf Step 2. Continuity method.} We now prove existence via the continuity method. For a parameter $t \in [0,1]$, consider the family of equations
\begin{equation}\label{family 2}
F(D^2u_{t})= t\psi(x,u_{t}) + (1-t)F(D^2u_{N})\, \quad \textrm{in} \, \Omega, \qquad u_{t}=0 \quad \textrm{on} \, \partial \Omega.
\end{equation}
Define the set
\[
S=\{t\in [0,1]\,:\, \eqref{family 2} \textrm{ admits a solution } u_{t}\in C^{\infty}(\overline{\Omega})\}\,.
\]
Note that at $t = 0$, the function $u_N$ is a solution to \eqref{family 2}, so $0 \in S$ and $S \neq \emptyset$.  To apply the continuity method, we shall verify that $S$ is open and closed.

{\bf Step 3. Openness.}  Let
\[
L_{t}=\Delta_{F}-t\psi_{s}(x,u_{t}) \qquad \textrm{ where} \quad \Delta_{F}:=\sum_{i,j=1}^{n}F^{ij}(D^2u_{t})\partial_{i}\partial_{j}
\]
denote the linearization of \eqref{family 2}. By concavity, 
\[
\Delta_{F}(u_{N}-u_{t})\geq F(D^2u_{N})-F(D^2u_{t})\,.
\]
Moreover, from \eqref{sum} applied to $u = u_t$, we get $\Delta_{F}u_{t}=F(D^2u_{t})$. This implies 
\[
\Delta_{F}u_{N} \geq F(D^2u_{N})=N-\lambda u_{N}> -\lambda u_{N}.
\]
It follows that
\begin{equation}
\lambda_{1}(\Delta_{F})> \lambda \geq t\lambda>-t\psi_{s}\,,
\end{equation}
where $\lambda_{1}(\Delta_{F})$ is the first eigenvalue of the linear operator $\Delta_{F}$. Therefore, the operator $L_t$ is invertible, which implies that $S$ is open. 

{\bf Step 4. Closeness.} To prove that $S$ is closed, it suffices to establish a uniform $L^\infty$ estimate for the solutions $u_t$. Once this is achieved, the higher-order estimates follow from the arguments in Sections \ref{First order estimate} and \ref{Second order estimates}.  

From \eqref{sub} and \eqref{family 2}, we obtain
\begin{equation}\label{zuo cha}
F(D^2u_{t})-F(D^2u_{N})<t(\psi(x,u_{t})-\psi(x,u_{N})).
\end{equation}
Define $u_{r}=ru_{t}+(1-r)u_{N}$ for $r\in (0,1)$, and let
\[
\overline{F}^{ij}=\int_{0}^{1}F^{ij}(D^2u_{r})dr\,, \quad \overline{\psi}_{s}=\int_{0}^{1}\psi_{s}(x,u_{r})dr\,, \quad \Delta_{\overline{F}}=\sum_{i,j=1}^{n}\overline{F}^{ij}\partial_{i}\partial_{j}\,.
\]
Set $w=u_{N}-u_{t}$. Then \eqref{zuo cha} simplifies to $\Delta_{\overline{F}}w>t\overline{\psi}_{s}w\,,$
which also implies that
\begin{equation}\label{bar F eigen}
\lambda_{1}(\Delta_{\overline{F}})>-t \overline{\psi}_{s}\,.
\end{equation}
Let $v_{1}$ be the first eigenfunction of $\Delta_{\overline{F}}$, normalized so that $\inf_{\Omega}v_{1}=-1$, and define
\[
\theta_{0}=\inf\{\theta\,:\, w+\theta v_{1}\leq 0\, \textrm{ in } \Omega\}.
\]
We claim that $\theta_{0}\leq 0$. Suppose, for contradiction, that $\theta_{0}> 0$. Then from \eqref{bar F eigen}, we derive
\begin{equation}\label{contrary}
\Delta_{\overline{F}}(w+\theta_{0}v_1)>t\overline{\psi}_{s}w-\lambda_{1}(\Delta_{\overline{F}})\theta_{0}v_{1}>t\overline{\psi}_{s}(w+\theta_{0}v_1).
\end{equation}
Choose a  positive constant $A$ such that $A\geq t\overline{\psi}_{s}$. Since $w+\theta_{0}v_1\leq 0$ by the definition of $\theta_0$, it follows that
\[
(\Delta_{\overline{F}}-A)(w+\theta_{0}v_1)\geq 0.
\]
The strong maximum principle then implies that either $w+\theta_{0}v_1\equiv 0$ or $w+\theta_{0}v_1<0$ in $\Omega$. The former case contradicts \eqref{contrary}. For the latter case, by the Hopf lemma for $\Delta_{\overline{F}}$, there exists $\theta_{1}>0$ such that $w+\theta_{0}v_1\leq \theta_{1} v_{1}$ in $\Omega$, which contradicts the definition of $\theta_{0}$. This proves the claim. Therefore, we conclude that $w\leq -\theta_0v_{1}\leq  0$, and hence
\[
0\geq u_{t}\geq u_{N} \quad \textrm{ in } \Omega.
\]
Thus, $S$ is closed. 

Consequently, since $S$ is nonempty, open and closed, it follows that  $S=[0,1]$. This completes the proof.
\end{proof}

\begin{remark}
When $\psi_{s} \geq 0$, Proposition \ref{bif} partially recovers the classical theorem of Caffarelli–Nirenberg–Spruck \cite{CNS85}. Our result thus relaxes their structural assumption.  However, due to the lack of a variational structure for the general concave elliptic Hessian operator $F$, the question of uniqueness remains an open challenge.
\end{remark}

\end{document}